\newtheorem{lemma}{Lemma}
\newtheorem{theorem}{Theorem}
\newtheorem{definition}{Definition}
\tikzstyle{vertex} = [fill,shape=circle,node distance=80pt]
\tikzstyle{edge} = [fill,opacity=.5,fill opacity=.5,line cap=round, line join=round, line width=50pt]
\tikzstyle{elabel} =  [fill,shape=circle,node distance=30pt]
\title{A Bound on the Spectral Radius of Hypergraphs with $e$ Edges}
\author{
Shuliang Bai \thanks{University of South Carolina, Columbia, SC 29208,
({\tt sbai@math.sc.edu}).} \and 
Linyuan Lu
\thanks{University of South Carolina, Columbia, SC 29208,
({\tt lu@math.sc.edu}). This author was supported in part by NSF
grant DMS 1300547 and DMS 1600811.}
}
\begin{document}
\maketitle
\begin{abstract}
For $r\geq 3$, let
 $f_r\colon [0,\infty)\to [1,\infty)$ be the unique analytic function such that
$f_r({k\choose r})={k-1\choose r-1}$ for any $k\geq r-1$.  We prove that the spectral radius of an 
$r$-uniform hypergraph $H$ with $e$ edges is at most $f_r(e)$. The equality holds
if and only if $e={k\choose r}$ for some positive integer $k$ and $H$ is the union of a complete $r$-uniform hypergraph 
$K_k^r$ and some possible isolated vertices. 
This result generalizes the classical Stanley's theorem
on graphs.
\end{abstract}

\textsl{MSC:} 05C50; 05C35; 05C65 

\textsl{keywords:} Spectral radius, Uniform hypergraph, Adjacency tensor, $\alpha$-normal labeling, Stanley's theorem\\

\section{History}
The spectral radius $\rho(G)$ of a graph $G$ is the maximum eigenvalue of its adjacency matrix. Which graph has the maximum spectral radius among all graphs with $e$ edges?
If $e={k \choose 2}$,  Brualdi and Hoffman \cite{Brualdi} proved that the maximum of $\rho(G)$ is reached by the union of a compete graph on $k$ vertices and some possible isolated vertices. They conjectured that 
the maximum spectral radius of a graph $G$ with $e={k\choose 2} + s$ edges is attained by the graph
$G_e$, which is obtained from complete graph $K_k$ by adding a new vertex and $s$ new edges.
In 1987, Stanley \cite{stanley} proved that
the spectral radius of a graph $G$ with $e$ edges
is at most $\frac{\sqrt{1+8e}-1}{2}$. The equality holds if and only if $e={k\choose 2}$
and $G$ is the union of the complete graph $K_k$ and some isolated vertices.
Friedland \cite{Friedland} proved a bound which is tight on 
the complete graph with one, two, or three edges removed or the complete graph with one edge added. 
Rowlinson \cite{Row} finally confirmed Brualdi and Hoffman's conjecture, and proved that 
$G_e$ attains the maximum spectral radius among all graphs with $e$ edges.  

On the problem of maximizing spectral radius of a certain class of hypergraphs, 
Fan, Tan, Peng and Liu \cite{FTPL} determined the extremal spectral radii of several classes of $r$-uniform hypergraphs with few edges. 
Xiao,  Wang and Lu \cite{XWL} determined the unique $r$-uniform supertrees  with maximum spectral radii among all $r$-uniform supertrees with given degree sequences.  
Li, Shao, and Qi \cite{LSQ} determined the extremal spectral radii of $r$-uniform supertrees.
In \cite{KLQY}, Kang, Liu, Qi, and Yuan 
solved a conjecture of Fan et al.\cite{FTPL} related to compare the spectral radii of some $3$-uniform hypergraphs. Chen, Chen, and Zhang \cite{CCZ} proved several good upper bounds for the adjacency and signless Laplacian spectral radii of uniform hypergraphs in terms of degree sequences.

In this paper, we will generalize Stanley's theorem to hypergraphs, that is, maximizing the spectral radius of $r$-uniform hypergraphs among all $r$-uniform hypergraphs with a given number of edges. 
For $r\geq 3$, an $r$-uniform hypergraph $H$ on $n$ vertices consists of a vertex set $V$ and an edge set  $E\subseteq {V\choose r}$.
The adjacency tensor $A$ of $r$-uniform hypergraph $H$ refers to an  $r$-order $n$-dimensional tensor
$A=(a_{i_1 \cdots i_r})$ defined by
$$a_{i_1 \cdots i_r}=
 \begin{cases}
\frac{1}{(r-1)!}  & \text{if $i_1\cdots i_r$ is an edge of $H$ ,} \\
0  & \text{otherwise,}
\end{cases}
$$
where each $i_j$ runs from $1$ to $n$ for $j\in[r]$. 
The adjacency tensor $A$ of $r$-uniform hypergraph is always  nonnegative and symmetric.
  
Given a $r$-uniform hypergrpah $H$, the polynomial form $P_H(\mathbf{x})\colon \mathbb{R}^n\to \mathbb{R}$  is defined for any vector  $\mathbf{x}=\left( x_1, \cdots, x_n\right) \in \mathbb{R}^n$ as 
$$ P_H(\mathbf{x})= \sum\limits_{i_1 \cdots i_r=1}^n a_{i_1 \cdots i_r} x_{i_1} \cdots x_{i_r}= r\sum\limits_{i_1 \cdots i_r\in E(H)}x_{i_1} \cdots x_{i_r}.$$
Then the spectral radius of a $r$-uniform hypergrpah $H$ is
$$\rho(H)=\max_{ \Vert\mathbf{x}\Vert_r=1} \rho(H)= \max_{ \Vert\mathbf{x}\Vert_r=1}r\sum\limits_{i_1 \cdots i_r\in E(H)} x_{i_1}\cdots x_{i_r}, 
$$
where $\Vert\mathbf{x}\Vert_r=\left(\sum\limits_{i=1}^{n} |x_i|^r\right)^{1/r}.$

In general, one can also define the spectral radius of any tensor $A$ using eigenvalues.
A pair $(\lambda, \mathbf{x})\in \mathbb{C}\times (\mathbb{C}^n\setminus\{\mathbf{0}\})$  is called an eigenvalue and an eigenvector of $A$ if they satisfy $A\mathbf{x}^{r-1}=\lambda \mathbf{x}^{[r-1]}$, that is for any $i\in [n]$, 
$$\sum\limits_{i_2,...,i_r=1}^{n} a_{i i_2 \cdots i_r} x_{i_2} \cdots x_{i_r}=\lambda x_{i}^{r-1}. $$
The spectral radius $\rho(A)$ is defined to be the largest modulus of 
eigenvalues of $A$.
When $A$ is symmetric and non-negative, 
the two definitions are equivalent.
If $\mathbf{x}$ is a real eigenvector of $A$, clearly the corresponding
eigenvalue $\lambda$ is also real. In this case, $\mathbf{x}$ is called an {\em $H$-eigenvector} and $\lambda$  an {\em $H$-eigenvalue}.
Furthermore, if $\mathbf{x}\in\mathbb{R}^n_+$,
where $\mathbb{R}^n_+ = \{x \in \mathbb{R}^n : x \geq 0\}$, 
then $\lambda$ is an {\em $H^+$-eigenvalue} of $A$. 
If $\mathbf{x}\in \mathbb{R}^n_{++}$, 
where $\mathbb{R}^n_{+ +}= \{x \in \mathbb{R}^n : x > 0\}$,
then $\lambda$ is said to be
an {\em $H^{++}$-eigenvalue} of $A$. 
\begin{theorem}
\textbf{(Perron-Frobenius theorem for non-negative tensors)}
\begin{enumerate} 
\item (Yang and Yang 2010 \cite{YangYang}) If $A$ is nonnegative tensor of order $r$ and dimension $n$, 
then the spectral radius $\rho(A)$ is an $H^+$-eigenvalue of $A$. 
\item (Frieland, Gaubert and Han 2011 \cite{FriedlandGaubetHan}) If furthermore $A$ is weakly irreducible, 
then  $\rho(A)$ is the unique  $H^{++}$-eigenvalue of $A$, 
with the unique eigenvector $\mathbf{x}\in \mathbb{R}^n_{++}$ , up to a positive scaling coefficient.
\item (Chang, Pearson and Zhang 2008 \cite{ChangPearsonZhang}) If moreover $A$ is irreducible, then  $\rho(A)$ is the unique  $H^{+}$-eigenvalue of $A$, 
with the unique eigenvector $\mathbf{x}\in \mathbb{R}^n_{+}$, up to a positive scaling coefficient.
\end{enumerate}
\end{theorem}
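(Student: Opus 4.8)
The plan is to establish the three parts in order of increasing structure, since each rests on the previous. Throughout write $f\colon\mathbb{R}^n_+\to\mathbb{R}^n_+$ for the map with $f(\mathbf{x})_i=\big((A\mathbf{x}^{r-1})_i\big)^{1/(r-1)}$; because $A$ is nonnegative, $f$ is homogeneous of degree one and coordinatewise monotone, and the eigenvalue equation $A\mathbf{x}^{r-1}=\lambda\mathbf{x}^{[r-1]}$ with $\mathbf{x}\ge\mathbf{0}$ is exactly $f(\mathbf{x})=\lambda^{1/(r-1)}\mathbf{x}$, so I am hunting for fixed directions of $f$. For Part 1 (existence) I would first assume $A$ strictly positive: then the normalized map $T(\mathbf{x})_i=f(\mathbf{x})_i/\sum_j f(\mathbf{x})_j$ is continuous on the whole simplex $\Delta=\{\mathbf{x}\ge\mathbf{0}:\sum_i x_i=1\}$, so Brouwer's fixed point theorem produces an eigenvector with positive eigenvalue. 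For a general nonnegative $A$ I would perturb to $A_\varepsilon=A+\varepsilon J$ (with $J$ the all-ones tensor), obtain positive unit eigenpairs $(\lambda_\varepsilon,\mathbf{x}_\varepsilon)$, and extract a convergent subsequence as $\varepsilon\to 0$; monotonicity and continuity of the spectral radius in the entries give $\lambda_\varepsilon=\rho(A_\varepsilon)\to\rho(A)$, and the limit $\mathbf{x}_0\ge\mathbf{0}$ is a nonzero eigenvector for $\rho(A)$. To confirm the eigenvalue is $\rho(A)$ rather than a smaller one I would invoke the Collatz--Wielandt characterization
$$\rho(A)=\max_{\mathbf{x}\ge\mathbf{0},\ \mathbf{x}\ne\mathbf{0}}\ \min_{i:\,x_i>0}\frac{(A\mathbf{x}^{r-1})_i}{x_i^{r-1}},$$
whose ``$\ge$'' direction is immediate from the Perron vector and whose ``$\le$'' direction comes from taking coordinatewise absolute values in any eigenequation $A\mathbf{y}^{r-1}=\mu\mathbf{y}^{[r-1]}$ and using $A\ge 0$.

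For Part 2 (weak irreducibility) I would first upgrade the Perron vector to a strictly positive one: if some coordinate of $\mathbf{x}_0$ vanished, strong connectivity of the representation digraph $G_A$ (the defining feature of weak irreducibility) would propagate the zero along edges until $\mathbf{x}_0=\mathbf{0}$, contradicting normalization; hence $\rho(A)$ is an $H^{++}$-eigenvalue. Uniqueness is the crux. Given positive eigenvectors $\mathbf{x},\mathbf{y}$ for $\lambda=\rho(A)$, set $m=\min_i x_i/y_i$, attained at some $i_0$, and read the eigenequation at $i_0$:
$$\lambda x_{i_0}^{r-1}=\sum_{i_2,\dots,i_r}a_{i_0 i_2\cdots i_r}x_{i_2}\cdots x_{i_r}\ \ge\ m^{r-1}\sum_{i_2,\dots,i_r}a_{i_0 i_2\cdots i_r}y_{i_2}\cdots y_{i_r}=m^{r-1}\lambda y_{i_0}^{r-1}.$$
Since $x_{i_0}=m\,y_{i_0}$ the two ends coincide, so equality holds throughout; as each ratio $x_{i_j}/y_{i_j}\ge m$ and their product equals $m^{r-1}$ on every term with a positive coefficient, each such ratio must equal $m$. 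Strong connectivity then spreads the ratio $m$ along directed paths in $G_A$ to all coordinates, giving $\mathbf{x}=m\mathbf{y}$.

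For Part 3 (irreducibility) I would use that irreducibility is strictly stronger than weak irreducibility, so Part 2 applies directly, and in addition every nonnegative eigenvector is automatically positive: if the support $S$ of an $H^+$-eigenvector were a proper nonempty subset of $[n]$, its complement would exhibit a block structure witnessing reducibility of $A$, a contradiction. Consequently all $H^+$-eigenvectors are $H^{++}$-eigenvectors, and the uniqueness from Part 2 immediately upgrades to uniqueness among all nonnegative eigenvectors.

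The main obstacle I anticipate is twofold. First, the nonlinear map $f$ is only continuous on the interior of the cone when $A$ is reducible, so Brouwer cannot be applied directly to a general nonnegative tensor; this is precisely what forces the perturbation-and-limit detour in Part 1, and some care is needed to guarantee that the limiting eigenvector is nonzero. Second, the uniqueness argument in Part 2 hinges on correctly extracting the equality case of the min-ratio inequality and on verifying that weak irreducibility genuinely propagates the common ratio across the entire index set; making this propagation rigorous along directed walks in $G_A$ is the most delicate step.
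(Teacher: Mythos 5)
A preliminary remark: the paper does not prove this theorem at all --- it is quoted as known background, with the three items cited to \cite{YangYang}, \cite{FriedlandGaubetHan} and \cite{ChangPearsonZhang} --- so your proposal can only be judged on its own correctness and against those original arguments. Your Part 1 (Brouwer on the simplex for positive tensors, then $A_\varepsilon=A+\varepsilon J$ and a limiting argument) and your Part 3 (the zero set of an $H^+$-eigenvector is exactly a witness of Chang--Pearson--Zhang reducibility) follow the standard routes and are sound in outline. The genuine gap is in Part 2, at the step ``if some coordinate of $\mathbf{x}_0$ vanished, strong connectivity would propagate the zero along edges until $\mathbf{x}_0=\mathbf{0}$.'' For $r\geq 3$ this propagation is false. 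If $x_{i_0}=0$, the eigenequation at $i_0$ gives $\sum a_{i_0 i_2\cdots i_r}x_{i_2}\cdots x_{i_r}=0$, so every positively weighted term vanishes; but each term is a \emph{product}, so this only forces some factor $x_{i_j}$ in each such tuple to vanish, not the coordinate of a prescribed out-neighbour $j$ of $i_0$ in $G_A$. What you actually get --- every positively weighted tuple with first index in the zero set $Z$ meets $Z$ --- is precisely the statement that $Z$ witnesses reducibility in the sense of \cite{ChangPearsonZhang}; it contradicts irreducibility (which is why your Part 3 works) but it does not contradict weak irreducibility. Concretely, take $r=3$, $n=2$, $a_{111}=a_{112}=a_{121}=a_{212}=a_{221}=1$ and all other entries $0$. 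Then $G_A$ has edges $1\to 1$, $1\to2$, $2\to1$, $2\to2$, so $A$ is weakly irreducible; yet $\mathbf{x}=(1,0)$ is an $H^+$-eigenvector with eigenvalue $1$, because the equation at vertex $2$ reads $2x_1x_2=\lambda x_2^2$ and is killed by the factor $x_2$, so the zero never spreads to $x_1$ despite the edge $2\to1$. (Here $\rho(A)\geq\frac{1+\sqrt{17}}{2}>1$, via the positive eigenpair with $\mathbf{x}=(1,\frac{\sqrt{17}-1}{4})$, so the example is consistent with the theorem --- but it refutes your propagation argument and shows that weakly irreducible tensors do admit $H^+$-eigenpairs with zero entries.) To repair Part 2 you need a genuinely new ingredient: for instance, note that an $H^+$-eigenvector with nonempty zero set $Z$ restricts to a positive eigenvector of the principal subtensor $A|_S$ on $S=[n]\setminus Z$, hence its eigenvalue is at most $\rho(A|_S)$, and then prove the strict monotonicity $\rho(A|_S)<\rho(A)$ for proper principal subtensors of weakly irreducible tensors (itself a Collatz--Wielandt perturbation argument using an $S\to Z$ edge); alternatively, follow \cite{FriedlandGaubetHan}, who construct the positive eigenvector directly by nonlinear Perron--Frobenius theory rather than by purifying the Yang--Yang vector.

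Two smaller gaps, both fixable with your own min-ratio technique. First, in Part 2 you only prove that two positive eigenvectors with the \emph{same} eigenvalue are proportional; the claim that $\rho(A)$ is the \emph{unique} $H^{++}$-eigenvalue also requires the cross comparison (two positive eigenpairs $(\lambda,\mathbf{x})$, $(\mu,\mathbf{y})$ force $\lambda=\mu$: normalize $\min_i x_i/y_i=1$ and read the equation at a minimizing index both ways) together with the fact that any positive eigenpair's eigenvalue equals $\rho(A)$ (compare $|\mathbf{z}|$, for an arbitrary eigenpair $(\mu,\mathbf{z})$, against the positive eigenvector to get $|\mu|\leq\lambda$). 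Second, in Part 1 the direction-bookkeeping for Collatz--Wielandt is off: taking absolute values in an eigenequation proves $\rho(A)\leq$ the max-min quantity, which is the same direction the Perron vector provides; what your limiting argument actually needs is the monotonicity $\rho(A)\leq\rho(A_\varepsilon)=\lambda_\varepsilon$, obtained by comparing $|\mathbf{y}|$ for any eigenpair of $A$ against the strictly positive Perron vector of $A_\varepsilon$, combined with $\lambda_0\leq\rho(A)$ because the limit pair is itself an eigenpair of $A$.
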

 
Perason and Zhang\cite{PerasonZhang} proved that 
the adjacency tensor $A$ of a connected hypergraph $H$ 
is weakly irreducible, 
thus by Perron-Frobenius theorem, 
there exists a unique positive eigenvector up to scales 
corresponding to $\rho(H)$. 
And this eigenvector is called {\em Perron-Frobenius vector}. 
Please read the survey paper \cite{CQZ}  on the spectral theory of nonnegative tensors for the terminologies not defined in this paper.

Note that the spectral radius of the complete hypergraph $K_n^r$ is ${n-1\choose r-1}$.
This motivated us to define an analytic function $f_r\colon [0,\infty)\to [1,\infty)$ so that
\begin{equation}
  \label{eq:fr}
f_r\left({n\choose r}\right)={n-1\choose r-1}.  
\end{equation}
See the detailed definition of $f_r(x)$ in section 3. 
The following theorem generalizes Stanley's theorem.

\begin{theorem}\label{Th1}
  For $r\geq 2$, suppose that $H$ is an $r$-uniform hypergraph with $e$ edges.
Then its spectral radius $\rho(H)$ is at most $f_r(e)$.
The equality holds if and only if 
$e={k\choose r}$ for an integer $k$ and $H$ is the complete $r$-uniform
hypergraph $K_k^r$ possibly with some isolated vertices added.
\end{theorem}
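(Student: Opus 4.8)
The plan is to first strip the problem down to its essential case and then identify the extremal structure. Since the adjacency tensor is block-diagonal over connected components and isolated vertices contribute nothing, $\rho(H)$ equals $\rho(H')$ for some connected component $H'$, say with $e'\le e$ edges. As $f_r$ is increasing (Section~3), $\rho(H)=\rho(H')\le f_r(e')\le f_r(e)$, and chasing the equality case backward forces $e'=e$ (a single nontrivial component) together with $H'=K_k^r$. Hence I would assume throughout that $H$ is connected, whereupon Part~2 of the Perron--Frobenius theorem gives a strictly positive eigenvector $\mathbf{x}\in\mathbb{R}^n_{++}$ with $\|\mathbf{x}\|_r=1$ and $A\mathbf{x}^{r-1}=\rho\,\mathbf{x}^{[r-1]}$. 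Multiplying the $v$-th eigen-equation by $x_v$ and summing over $v$ recovers $\rho=r\sum_{S\in E}\prod_{i\in S}x_i$, so the theorem becomes the two-level optimization problem of maximizing $r\sum_{S\in E}\prod_{i\in S}x_i$ over unit vectors $\mathbf{x}\ge 0$ and edge sets $E$ with $|E|=e$.

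To exploit the eigen-equations I would pass to the $\alpha$-normal labeling encoded by $\mathbf{x}$: setting $b(S,v)=\dfrac{\prod_{u\in S}x_u}{\rho\,x_v^{r}}$ for each incident pair $v\in S$ yields $\sum_{S\ni v}b(S,v)=1$ at every vertex and $\prod_{v\in S}b(S,v)=\rho^{-r}$ on every edge, so $H$ is $\rho^{-r}$-normal. The bound $\rho(H)\le f_r(e)$ is therefore equivalent to producing, for the extremal hypergraph, a labeling that is \emph{subnormal} at the level $\alpha=f_r(e)^{-r}$, namely with vertex sums at most $1$ and edge products at least $f_r(e)^{-r}$, from which $\rho\le\alpha^{-1/r}=f_r(e)$ follows. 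The uniform split $b(S,v)=1/d_v$ already meets the vertex condition with equality, reducing the edge condition to controlling the degree products $\prod_{v\in S}d_v$; this succeeds cleanly only when the degrees are balanced, which signals that one must first move $H$ to a highly symmetric extremal configuration before any such labeling can carry the day.

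The core structural step is to show that the maximum of $\rho$ over all $r$-graphs with $e$ edges is attained by a compressed (colexicographic) hypergraph, call it $C_{r,e}$, and, when $e=\binom{k}{r}$, by $K_k^r$ itself. I would run a shifting/compression argument: given the positive Perron vector sorted as $x_1\ge\cdots\ge x_n$, replace $E$ by the colex-initial edge set of the same size and bound $\rho$ from above through the Rayleigh quotient, re-optimizing $\mathbf{x}$ after each shift so the replacement does not decrease the spectral radius. Once $H=C_{r,e}$ is compressed, I would bound $r\sum_{S\in E}\prod_{i\in S}x_i$ by the corresponding truncation of the degree-$r$ elementary symmetric polynomial $\sigma_r(\mathbf{x})$ and invoke AM--GM together with Maclaurin-type (Schur-concavity) inequalities under the constraint $\sum_i x_i^r=1$. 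Equality in AM--GM forces the entries of $\mathbf{x}$ on each edge to coincide, while equality in the symmetric-function bound forces $\mathbf{x}$ to be uniform on its support and $E$ to be the full $\binom{[k]}{r}$; together these pin the extremal hypergraph down to $K_k^r$ with the uniform eigenvector, which is exactly the equality case $e=\binom{k}{r}$.

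The main obstacle is twofold. First, the compression step is delicate: unlike the graph case, the product order $\prod_{i\in S}x_i$ on $r$-subsets need not agree with the colex order, so the shift must be justified by a genuine Rayleigh-quotient estimate (or an $\alpha$-subnormal relabeling) rather than by a naive ``top-$e$-products'' selection. Second, for the strict inequality when $\binom{k}{r}<e<\binom{k+1}{r}$ one must control the exact rate at which $\rho$ grows as edges are appended to $K_k^r$ and compare it against the analytic interpolant $f_r$; this is where the properties of $f_r$ developed in Section~3, its strict monotonicity and the convexity needed to dominate the discrete increments from $\binom{k}{r}$ to $\binom{k+1}{r}$, do the real work. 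I expect proving $\rho(C_{r,e})<f_r(e)$ strictly for non-binomial $e$, via these analytic properties, to be the hardest part: the case $r=2$ collapses to the single quadratic inequality $\rho^2+\rho\le 2e$, but for $r\ge 3$ no such closed form exists and the bound must be read off from the defining relation $f_r\!\big(\binom{n}{r}\big)=\binom{n-1}{r-1}$.
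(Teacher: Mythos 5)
Your proposal diverges from the paper's argument at the decisive step, and the route you choose has a genuine gap that you flag but do not close. The paper proves the bound by double induction on $r$ and $e$: it takes a maximum connected $H$, picks a vertex $v$ of maximum Perron weight, shows via the Lov\'asz form of the Kruskal--Katona theorem that the link graph $G_v$ has at least $f_r(e)$ edges (Lemma~\ref{GvHv}), applies the inductive hypothesis to the $(r-1)$-uniform link $G_v$ and to the $r$-uniform residual $H_v$, and then glues their consistent normal labelings into an $\alpha$-subnormal labeling of $H$ with $\alpha=f_r(e)^{-r}$; the gluing works precisely because of the analytic inequality of Lemma~\ref{l2}. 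Nowhere does the paper identify the extremal hypergraph for general $e$, and it does not need to.

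Your plan instead rests on first showing that the maximizer is the colexicographic hypergraph $C_{r,e}$ and then bounding $\rho(C_{r,e})$ by symmetric-function inequalities. The first step is not a technical delicacy to be smoothed over: for $r=2$ it is exactly the Brualdi--Hoffman conjecture, which required Rowlinson's separate and substantially harder argument, and for $r\ge 3$ no such compression theorem is available. The shifting you describe (``re-optimizing $\mathbf{x}$ after each shift'') is only known to increase $\rho$ when an edge is moved toward a vertex of larger Perron weight (Lemma~\ref{makelarger}); this does not produce the colex family, since, as you yourself note, the Perron-weight product order on $r$-sets need not agree with colex. The second step is also unfinished: bounding the truncated sum by $\sigma_r(\mathbf{x})$ and applying Maclaurin gives a bound tied to $\binom{k}{r-1}$ on $k+1$ vertices, which overshoots $f_r(e)$ whenever $\binom{k}{r}<e<\binom{k+1}{r}$, and you offer no mechanism to recover the loss. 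The ingredients you are missing are precisely the induction on the uniformity $r$ through the link graph of the heaviest vertex, the shadow bound $|E(G_v)|\ge f_r(e)$ from Kruskal--Katona, and the concavity inequality
\begin{equation*}
\frac{x^{1/(r-1)} f_{r-1}(x)}{f_r(e)^{r/(r-1)}} + \frac{f_r(e-x)}{f_r(e)}\leq 1
\end{equation*}
that makes the combined labeling subnormal. Your observation that the uniform split $b(S,v)=1/d_v$ certifies the bound when all degrees equal $f_r(e)$ is correct and explains the equality case, but it does not extend to unbalanced degree sequences, which is exactly the situation the paper's inductive gluing is designed to handle.
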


Note that $f_2(x)$ satisfies $f_2({n\choose 2})=n-1$. Let $e={n\choose 2}$ and solve
for $n$. We get
\begin{equation}
  \label{eq:f2}
f_2(e)=\frac{\sqrt{8e+1}-1}{2}.  
\end{equation}

Stanley's theorem is just a special case with $r=2$.

The main tool that we used in the paper is
the $\alpha$-normal labeling method, which was first developed by the second author and Dr. Man to classifying all connected
$r$-uniform hypergraphs with spectral radius at most $\sqrt[r]{4}$ in the paper \cite{LuMan}.
This method is used in \cite{KLQY}
and is generalized in \cite{ZLKB}.

The paper is organized as follows: In section 2, we review tools and prove new lemmas  regarding 
the spectral radius of $r$-uniform hypergraphs.
An important lemma about the function $f_r(x)$ will be proved in section 3. Finally we prove our main theorem in the last section.

\section{Lemmas on uniform hypergraphs}
Let $H=(V, E)$ be a connected $r$-uniform hypergraph whose spectral radius attains the maximum among all
 the $r$- uniform hypergraphs with $e$ edges. We call $H$ a maximum hypergraph. 

For $k\geq 1$, moving $k$ edges $(e_1, \cdots, e_k)$ from $(v_1,\cdots , v_k)$ to $v$
means replacing each edge $e_i$ by new edge $e_i\setminus \{v_i\}\cup \{v\}$ for $i=1,2,\ldots,k$.
Here $v_i$ is a vertex incident to $e_i$. 
This edge-shifting operation can be used to increase the spectral radius.

\begin{lemma}\label{makelarger}
\cite{LSQ} 
Let $k \geq 1$ and let $H$ be a connected $r$-hypergraph. Let $H'$
be the hypergraph obtained from $H$ by moving edges $(e_1, \cdots, e_k)$ from $(v_1,\cdots , v_k)$ to $v$.
Assume that $H′$ contains no multiple edges. If $\mathbf{x}$ is a Perron vector of $H$ and
$x_v \geq \max_{1\leq i \leq k} x_{v_i},$ then $\rho(H') > \rho(H)$.
\end{lemma}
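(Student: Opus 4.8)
The plan is to compare $H$ and $H'$ through the variational characterization $\rho(H)=\max_{\|\mathbf{x}\|_r=1}P_H(\mathbf{x})$, and then to upgrade the resulting weak inequality to a strict one by inspecting the eigenvalue equation at the vertex $v$. First I would fix the Perron vector $\mathbf{x}$ of $H$, normalized so that $\|\mathbf{x}\|_r=1$. Since $H$ is connected its adjacency tensor is weakly irreducible, so by the Perron--Frobenius theorem every coordinate satisfies $x_i>0$ and $P_H(\mathbf{x})=\rho(H)$. For each moved edge to remain an $r$-set we must have $v\notin e_i$, so each replacement $e_i'=e_i\setminus\{v_i\}\cup\{v\}$ is again an $r$-set containing $v$.

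Next I would evaluate $P_{H'}$ at the \emph{same} vector $\mathbf{x}$. Since $H$ and $H'$ differ only in that each $e_i$ is replaced by $e_i'$, writing $\prod_{j\in e}x_j$ for the monomial attached to an edge $e$ gives
\[
P_{H'}(\mathbf{x})-P_H(\mathbf{x})
= r\sum_{i=1}^k\Big(\prod_{j\in e_i'}x_j-\prod_{j\in e_i}x_j\Big)
= r\sum_{i=1}^k (x_v-x_{v_i})\prod_{j\in e_i\setminus\{v_i\}}x_j .
\]
Each factor $x_v-x_{v_i}$ is nonnegative by the hypothesis $x_v\ge\max_{1\le i\le k} x_{v_i}$, and the remaining product is positive, so $P_{H'}(\mathbf{x})\ge P_H(\mathbf{x})$. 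Combined with the variational bound this already yields $\rho(H')\ge P_{H'}(\mathbf{x})\ge P_H(\mathbf{x})=\rho(H)$.

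The substantive part is to make this inequality strict, which does not follow from the monomial comparison alone (for instance when $x_v=x_{v_i}$ for every $i$). I would argue by contradiction: if $\rho(H')=\rho(H)$, then equality holds throughout the display above, so $\mathbf{x}$ maximizes $P_{H'}$ over the unit $\ell_r$-sphere. Because $\mathbf{x}$ lies in the open positive orthant $\mathbb{R}^n_{++}$, the first-order (Lagrange multiplier) condition applies and forces $\mathbf{x}$ to be an eigenvector of the adjacency tensor $A'$ of $H'$ with eigenvalue $\rho(H')=\rho(H)$. The same eigenvalue equation holds for $A$ and $\mathbf{x}$, since $\mathbf{x}$ is a Perron vector of $H$. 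Subtracting the two equations at the coordinate $v$, and using that the edges of $H'$ through $v$ are exactly those of $H$ through $v$ (none of the removed $e_i$ contains $v$) together with the $k$ new edges $e_1',\dots,e_k'$, which are distinct from the old ones precisely because $H'$ has no multiple edges, I obtain
\[
0=\sum_{i_2,\dots,i_r}\big(a'_{v i_2\cdots i_r}-a_{v i_2\cdots i_r}\big)x_{i_2}\cdots x_{i_r}
=\sum_{i=1}^k\ \prod_{j\in e_i\setminus\{v_i\}}x_j\;>\;0,
\]
a contradiction. Hence $\rho(H')>\rho(H)$.

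The hardest point is exactly this last step: the polynomial estimate only gives $\ge$, and the genuine obstacle is to certify strictness. The device that makes it work is that moving edges onto $v$ strictly enlarges the set of edges incident to $v$ (guaranteed by the no-multiple-edges assumption), so the left-hand side of the eigenvalue equation at $v$ strictly increases while the right-hand side $\rho(H)\,x_v^{r-1}$ cannot change. The only subtlety to handle with care is the justification that a nonnegative maximizer of $P_{H'}$ is a genuine eigenvector, which is where positivity of every $x_i$ is essential.
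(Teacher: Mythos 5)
The paper states this lemma without proof, citing \cite{LSQ}; your argument is correct and is essentially the standard one given there: the Rayleigh-quotient comparison at the Perron vector $\mathbf{x}$ of $H$ yields $\rho(H')\ge\rho(H)$, and strictness follows because equality would force the strictly positive $\mathbf{x}$ to satisfy the eigenvalue equation of $H'$ at $v$, which is impossible since $v$ gains $k\ge 1$ new incident edges (all distinct from the old ones by the no-multiple-edges hypothesis) while losing none. I see no gaps; in particular you correctly isolate the two points that need care --- positivity of $\mathbf{x}$ from weak irreducibility of the connected $H$, and the passage from ``positive maximizer of $P_{H'}$ on the unit $\ell_r$-sphere'' to ``eigenvector of $A'$'' via the first-order condition.
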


By Lemma \ref{makelarger},  we can increase the spectral radius by doing the edge-shifting operations stated in the lemma; this process will end until no potential edge
can be moved in $H$. The resulted graph only has one non-trivial connected component.

\begin{lemma}\label{commonvertex}
\cite{FTPL}
If $H$ is a maximum hypergraph among the connected hypergraphs with fixed number edges, then $H$ contains a vertex $v$ adjacent to all other vertices. 
\end{lemma}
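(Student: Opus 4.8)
The plan is to run the edge-shifting operation of Lemma~\ref{makelarger}, using the Perron–Frobenius vector of $H$ as the weight that certifies when a shift is profitable. Since $H$ is connected, its adjacency tensor is weakly irreducible, so by the Perron–Frobenius theorem there is a unique (up to scaling) eigenvector $\mathbf{x}\in\mathbb{R}^n_{++}$ associated with $\rho(H)$. I would fix a vertex $v$ of maximum weight, $x_v=\max_{1\le i\le n}x_i$, and let $N(v)$ denote the set of vertices sharing an edge with $v$ and $W=\{v\}\cup N(v)$ its closed neighborhood. The goal is to prove $W=V$, and I will argue by contradiction: assuming $W\subsetneq V$, I will produce a connected $r$-uniform hypergraph $H'$ with the same number of edges and $\rho(H')>\rho(H)$.

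Assume $V\setminus W\neq\emptyset$. Because $H$ is connected, some edge $g$ meets both $W$ and $V\setminus W$; if $g$ contained $v$ then every vertex of $g$ would lie in $N(v)\subseteq W$, so in fact $v\notin g$, and $g$ contains a vertex $b_0\in V\setminus W$ that is not adjacent to $v$. I now want to move $g$ to $v$, that is, replace $g$ by $(g\setminus\{c\})\cup\{v\}$ for a suitable $c\in g$. Since $x_v\ge x_c$ for every $c\in g$, Lemma~\ref{makelarger} yields $\rho(H')>\rho(H)$ as soon as the move is legal, the only requirement being that it creates no multiple edge. To secure a legal choice, suppose toward contradiction that $(g\setminus\{c\})\cup\{v\}\in E(H)$ for \emph{every} $c\in g$. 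Each such edge contains $v$ together with $g\setminus\{c\}$, so $v$ is adjacent to every vertex of $g\setminus\{c\}$; taking the union over all $c\in g$ (valid since $|g|=r\ge 2$ forces $\bigcup_{c\in g}(g\setminus\{c\})=g$) shows $v$ is adjacent to every vertex of $g$, contradicting $b_0\notin N(v)$. Hence some $c^{*}\in g$ gives a simple edge $(g\setminus\{c^{*}\})\cup\{v\}\notin E(H)$, and moving $g$ from $c^{*}$ to $v$ strictly increases the spectral radius.

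The remaining point, and the step I expect to be the main obstacle, is to guarantee that $H'$ is still \emph{connected}, so that it genuinely competes in the maximization over connected hypergraphs; Lemma~\ref{makelarger} by itself only controls simplicity, not connectivity. Pulling $g$ into $W$ can in principle strand $c^{*}$ or cut off part of $V\setminus W$ when $g$ is the sole link between the two sides. I would address this by choosing the crossing edge $g$ and the evicted vertex $c^{*}$ more carefully — for instance preferring a $g$ with at least two vertices outside $W$, so that $(g\setminus\{c^{*}\})\cup\{v\}$ remains a crossing edge, or evicting a non-cut vertex — and, in the residual case where the shift separates the hypergraph, by working with the connected component of $H'$ that attains $\rho(H')$ and comparing its edge count to $e$; reconciling that component's size with the maximality of $H$ is the delicate part of the argument. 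Once connectivity is secured, the strict inequality $\rho(H')>\rho(H)$ contradicts the maximality of $H$, forcing $W=V$, i.e.\ $v$ is adjacent to every other vertex.
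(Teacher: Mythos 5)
The paper does not actually prove this lemma; it is imported verbatim from Fan--Tan--Peng--Liu \cite{FTPL}, so there is no in-paper proof to compare against. Judged on its own, the core of your argument is the standard (and correct) edge-shifting one: take $v$ of maximum Perron weight, find a crossing edge $g$ with $v\notin g$, and shift it onto $v$ using Lemma \ref{makelarger}. Your observation that not every $c\in g$ can have $(g\setminus\{c\})\cup\{v\}$ already present in $E(H)$ --- because the union $\bigcup_{c\in g}(g\setminus\{c\})=g$ would force $v$ to be adjacent to all of $g$, contradicting $b_0\notin N(v)$ --- is a clean way to guarantee a legal, multiple-edge-free shift, and the hypothesis $x_v\ge x_c$ needed for Lemma \ref{makelarger} is automatic.

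The genuine gap is exactly the one you flag and then leave open: $H'$ may be disconnected, and since $H$ is only assumed maximal \emph{among connected} hypergraphs, the inequality $\rho(H')>\rho(H)$ is not yet a contradiction. You propose ``preferring a $g$ with at least two vertices outside $W$'' or ``evicting a non-cut vertex,'' but neither is shown to be available simultaneously with the no-multiple-edge condition, and the fallback (``reconciling that component's size with the maximality of $H$'') is precisely the delicate step you do not carry out. It can be closed as follows: the only component of $H-g$ that can be severed in $H'$ is the one containing $c^{*}$, and that component is a subhypergraph of $H$, so its spectral radius is at most $\rho(H)<\rho(H')$; hence $\rho(H')$ is attained by the connected component $C$ containing $v$, which has $e-k$ edges for some $k\ge 0$. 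Appending $k$ further edges to $C$ (say a loose path of new edges hanging off one vertex) yields a connected hypergraph with exactly $e$ edges whose spectral radius is at least $\rho(C)=\rho(H')>\rho(H)$, since adding edges only adds nonnegative terms to the polynomial form $P_H(\mathbf{x})$ maximized over nonnegative unit vectors. This contradicts the maximality of $H$ and completes your argument; without some version of this step the proof is not finished.
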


{\bf Remark:} In fact, from the proof of the above Lemma, one can choose $v$ to be any one of the vertices where
the Perron-Frobenius vector achieves the maximum value.

\begin{definition}\label{shadef}
Given a family $\mathcal{F}$ of $r$-sets, the shadow $\partial(\mathcal{F})$ is defined as 
$$\partial(\mathcal{F})=\{e': e'=e\setminus\{v\}, \text{for some $e\in\mathcal{F}$, and $v\in e$} \}.$$
\end{definition}

\begin{definition}\label{link}
Given an $r$-hypergraph $H$ and a vertex $v$ of $H$, the link graph $G_v$ is the $(r-1)$-graph consisting of all $S \subset V (H)$ with $|S| = r - 1$
and $S \cup \{v\} \in E(H)$. 
\end{definition}

The celebrated Kruskal-Katona Theorem 
determines the minimum size of the shadow $\partial(\mathcal{F})$ given
the size of $\cal F$.

\begin{theorem}
(Kruskal \cite{Kruskal} and Katona \cite{Katona}) Any $r$-uniform set family $\mathcal{F}$ of size
$m={a_r\choose r }+{a_{r-1}\choose r-1}+\cdots+{a_k\choose k}$, where $a_r > a_{r-1} > \cdots > a_k \geq k \geq 1,$ must have
$$|\partial(F)|\geq {a_r\choose r-1 }+{a_{r-1}\choose r-2}+\cdots+{a_k\choose k-1}.$$
\end{theorem}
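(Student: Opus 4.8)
The plan is to prove the cascade bound by reducing an arbitrary family to a canonical, highly structured one via \emph{compression}, and then computing its shadow. For $1\le i<j$, define the compression $C_{ij}(\mathcal{F})$ that replaces every $A\in\mathcal{F}$ with $j\in A,\ i\notin A$ by $(A\setminus\{j\})\cup\{i\}$ whenever the latter is not already in $\mathcal{F}$, leaving all other sets fixed. This preserves $|\mathcal{F}|$. The crucial claim is that it never increases the shadow, which I would obtain from the set containment $\partial(C_{ij}(\mathcal{F}))\subseteq C_{ij}(\partial(\mathcal{F}))$; since $|C_{ij}(\partial(\mathcal{F}))|=|\partial(\mathcal{F})|$, this yields $|\partial(C_{ij}(\mathcal{F}))|\le|\partial(\mathcal{F})|$. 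Iterating $C_{ij}$ over all pairs strictly decreases $\sum_{A\in\mathcal{F}}\sum_{x\in A}x$, so it terminates at a \emph{left-compressed} family. Hence it suffices to prove the bound for left-compressed families.

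Second, for a left-compressed $\mathcal{F}$ I would induct on $r$ together with $m$, splitting by the largest vertex $n$. Write $\mathcal{F}_0=\{A\in\mathcal{F}:n\notin A\}$ and $\mathcal{F}_1=\{A\setminus\{n\}:A\in\mathcal{F},\ n\in A\}$, so $\mathcal{F}_0$ is $r$-uniform and $\mathcal{F}_1$ is $(r-1)$-uniform. Sorting $\partial(\mathcal{F})$ by whether a set contains $n$, the sets containing $n$ are exactly $\{C\cup\{n\}:C\in\partial(\mathcal{F}_1)\}$, while those avoiding $n$ are $\partial(\mathcal{F}_0)\cup\mathcal{F}_1$. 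Left-compression forces $\mathcal{F}_1\subseteq\partial(\mathcal{F}_0)$: for $B\in\mathcal{F}_1$ and any $i\in[n-1]\setminus B$, applying $C_{in}$ to $B\cup\{n\}\in\mathcal{F}$ places $B\cup\{i\}$ in $\mathcal{F}_0$, whence $B\in\partial(\mathcal{F}_0)$. This yields the clean recursion
$$|\partial(\mathcal{F})|=|\partial(\mathcal{F}_0)|+|\partial(\mathcal{F}_1)|,$$
with the side constraint $|\mathcal{F}_1|\le|\partial(\mathcal{F}_0)|$.

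Third comes the arithmetic of the cascade representation $m=\binom{a_r}{r}+\cdots+\binom{a_k}{k}$. Applying the inductive hypothesis to $\mathcal{F}_0$ (same $r$, smaller size) and to $\mathcal{F}_1$ (smaller uniformity) gives lower bounds in terms of the cascade shadow function of $|\mathcal{F}_0|$ and $|\mathcal{F}_1|$; combining these through the recursion must reproduce $\binom{a_r}{r-1}+\cdots+\binom{a_k}{k-1}$. The mechanism is Pascal's identity $\binom{a}{t}=\binom{a-1}{t}+\binom{a-1}{t-1}$, which mirrors exactly how each cascade term $\binom{a_i}{i}$ contributes $\binom{a_i}{i-1}$ to the shadow, and one uses the constraint $|\mathcal{F}_1|\le|\partial(\mathcal{F}_0)|$ to control the admissible splits $(|\mathcal{F}_0|,|\mathcal{F}_1|)$ so the worst case matches the colex initial segment. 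The base cases are $r=1$ and the single-term representation $m=\binom{a_r}{r}$, where the family is forced to be a clique on $a_r$ vertices with shadow $\binom{a_r}{r-1}$.

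I expect two places to be the real obstacles. The first is the shadow-containment $\partial(C_{ij}(\mathcal{F}))\subseteq C_{ij}(\partial(\mathcal{F}))$, which rests on a careful case analysis tracking, for each candidate shadow set, whether it is created or destroyed by the compression and confirming no genuinely new shadow set escapes the compressed target. The second, and harder, is the inductive bookkeeping: verifying that the cascade representation of $m$ splits into honest (strictly decreasing) representations of $|\mathcal{F}_0|$ and $|\mathcal{F}_1|$, that the monotonicity conditions $a_r>a_{r-1}>\cdots$ survive, and that the induced bound is minimized precisely by the extremal (colex) split. As an independent check on the leading term I would cross-validate with Katona's cycle method, which for $m=\binom{x}{r}$ cleanly gives $|\partial(\mathcal{F})|\ge\binom{x}{r-1}$, though it yields only the weaker Lov\'asz form and so does not replace the compression-plus-induction argument for the exact cascade bound.
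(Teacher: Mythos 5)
A point of order first: the paper does not prove this theorem at all. It is quoted as a classical result of Kruskal \cite{Kruskal} and Katona \cite{Katona}, and the only form the paper ever applies is the weaker Lov\'asz version (Theorem \ref{shadow}). So there is no in-paper proof to compare yours against; it must stand on its own. Judged that way, your first two steps are the standard compression argument and are sound: the containment $\partial(C_{ij}(\mathcal{F}))\subseteq C_{ij}(\partial(\mathcal{F}))$ is true and gives $|\partial(C_{ij}(\mathcal{F}))|\le|\partial(\mathcal{F})|$; iterated compressions terminate because $\sum_{A\in\mathcal{F}}\sum_{x\in A}x$ strictly decreases; and for a left-compressed family, splitting on the largest occurring vertex $n$ does yield $\mathcal{F}_1\subseteq\partial(\mathcal{F}_0)$ (your argument needs some $i\in[n-1]\setminus B$, hence $n>r$; the case $n=r$ is the one-edge family and is trivial), whence $|\partial(\mathcal{F})|=|\partial(\mathcal{F}_0)|+|\partial(\mathcal{F}_1)|$.

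The genuine gap is in your third step, and it is not mere unfinished bookkeeping: the decomposition you chose routes the difficulty into an inequality that your side constraint cannot reach. Write $m_0=|\mathcal{F}_0|$, $m_1=|\mathcal{F}_1|$, let $\partial_r(\cdot)$ denote the cascade shadow function, and set $T=\binom{a_r-1}{r-1}+\cdots+\binom{a_k-1}{k-1}$. Your recursion plus induction reduces the theorem to $\max\{\partial_r(m_0),\,m_1\}+\partial_{r-1}(m_1)\ge\partial_r(m)$, where the term $m_1$ in the maximum is what the constraint $|\mathcal{F}_1|\le|\partial(\mathcal{F}_0)|$ buys you. When $m_1\ge T$, monotonicity of $x\mapsto x+\partial_{r-1}(x)$ and Pascal's identity close the case, as you indicate. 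But when $0<m_1<T$, that constraint gives no leverage (it only bounds $|\partial(\mathcal{F}_0)|$ below by something small), and what you actually need is $\partial_r(m-m_1)+\partial_{r-1}(m_1)\ge\partial_r(m)$ --- a subadditivity-type property of the cascade function that is genuinely nontrivial, that your sketch neither isolates nor proves, and that does not follow from ``the worst case matches the colex split.'' The clean repair is to flip the decomposition: keep the same left-compression but split on the \emph{smallest} element, setting $\mathcal{F}_0=\{A:1\notin A\}$ and $\mathcal{F}_1=\{A\setminus\{1\}:1\in A\}$. Left-compression then gives the reverse containment $\partial(\mathcal{F}_0)\subseteq\mathcal{F}_1$, hence $|\partial(\mathcal{F})|=m_1+|\partial(\mathcal{F}_1)|$, and both cases close: if $m_1\ge T$, argue as before; if $m_1<T$, then $m_0>\binom{a_r-1}{r}+\cdots+\binom{a_k-1}{k}$, so induction on $m$ and monotonicity of $\partial_r$ give $|\partial(\mathcal{F}_0)|\ge\partial_r(m_0)\ge T$ (with the usual care when some $a_i-1<i$ and terms vanish), contradicting $|\partial(\mathcal{F}_0)|\le m_1<T$; this is Frankl's short proof. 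In summary: right toolkit, correct compression and recursion, but with your choice of split the hardest inequality is both indispensable and left unproved.
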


Kruskal-Katona Theorem has many applications. However, it is not easy to apply directly. In this paper,
we use a slightly weaker version due to Lov\'asz : 

\begin{theorem}\label{shadow} 
(Lov\'asz \cite{Lovas}) Any $r$-uniform set family $\mathcal{F}$ of size $m =
{x\choose r}$ where $x$ is a real and $x \geq r$, must have
$$|\partial(F)|\geq {x \choose r-1}.$$
\end{theorem}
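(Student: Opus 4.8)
The plan is to derive this Lov\'asz estimate from the exact Kruskal--Katona theorem stated just above, reducing the combinatorial claim to a purely numerical inequality for generalized binomial coefficients. Note first that $m=|\mathcal{F}|$ is a positive integer, whereas $x$ is prescribed only as the unique real with $\binom{x}{r}=m$; such an $x\ge r$ exists and is unique because $t\mapsto\binom{t}{r}$ is strictly increasing and continuous on $[r-1,\infty)$. I would take the unique cascade representation
\[
m=\binom{a_r}{r}+\binom{a_{r-1}}{r-1}+\cdots+\binom{a_k}{k},\qquad a_r>a_{r-1}>\cdots>a_k\ge k\ge 1,
\]
apply the Kruskal--Katona theorem to obtain $|\partial(\mathcal{F})|\ge\sum_{j=k}^{r}\binom{a_j}{j-1}$, and then it remains to prove the self-contained inequality
\[
\sum_{j=k}^{r}\binom{a_j}{j-1}\ \ge\ \binom{x}{r-1}\qquad\text{whenever}\qquad\sum_{j=k}^{r}\binom{a_j}{j}=\binom{x}{r}.
\]

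To prove this I would induct on $r$. The defining property of the cascade gives $\binom{a_r}{r}\le m<\binom{a_r+1}{r}$, so monotonicity of $\binom{t}{r}$ forces $a_r\le x<a_r+1$; in particular the single-term case $m=\binom{a_r}{r}$ yields $x=a_r$ and equality, which serves as the base of the induction. Otherwise I peel off the leading term: the tail $m'=\sum_{j=k}^{r-1}\binom{a_j}{j}=\binom{x}{r}-\binom{a_r}{r}$ is \emph{exactly} the cascade representation of the integer $m'$ as an $(r-1)$-uniform family (its top index is $r-1$), so the induction hypothesis in dimension $r-1$ gives $\sum_{j=k}^{r-1}\binom{a_j}{j-1}\ge\binom{x'}{r-2}$, where $x'\ge r-1$ is defined by $\binom{x'}{r-1}=m'$. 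Substituting, the entire problem collapses to the two-variable analytic inequality
\[
\binom{x}{r-1}\ \le\ \binom{a_r}{r-1}+\binom{x'}{r-2},\qquad\text{where}\qquad\binom{x'}{r-1}=\binom{x}{r}-\binom{a_r}{r}.
\]

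This last inequality is where the real work lies, and I expect it to be the main obstacle. Writing $u=a_r$, the constraint defines $x'$ implicitly as a function of $x$ on the interval $x\in[u,u+1)$, with $x'$ ranging over $[r-2,u)$; I would check the two endpoints (at $x=u$ the right side exceeds the left by exactly $\binom{r-2}{r-2}=1$, while letting $x\to u+1$ gives equality via $\binom{u+1}{r-1}=\binom{u}{r-1}+\binom{u}{r-2}$) and then show that the difference $\binom{u}{r-1}+\binom{x'}{r-2}-\binom{x}{r-1}$ stays nonnegative throughout, decreasing monotonically from $1$ down to $0$. The natural route is to differentiate in $x$, using $\tfrac{d}{dt}\binom{t}{s}=\binom{t}{s}\sum_{i=0}^{s-1}\tfrac{1}{t-i}$ together with $dx'/dx$ extracted from the constraint, and to control the sign of the resulting one-variable expression; an equivalent viewpoint is that $\binom{x}{r-1}$, regarded as a function of the volume $\binom{x}{r}$, is concave and agrees with the greedy cascade bound at the lattice points $\binom{n}{r}$, hence lies below that bound in between, and it is this concavity that must be nailed down. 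Should the analytic estimate prove stubborn, a fallback is to bypass Kruskal--Katona and instead reproduce Lov\'asz's original direct induction on the ground set, which establishes the real-valued bound without ever passing through the integer cascade.
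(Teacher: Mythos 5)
First, a framing point: the paper never proves this statement---it is quoted verbatim from Lov\'asz's book and used as a black box---so there is no internal proof to compare against, and attempting to derive it from the Kruskal--Katona theorem stated just above is a legitimate thing to try. Your reduction is structurally sound: peeling off the leading cascade term and inducting on $r$ does correctly collapse the problem to the two-variable inequality
\begin{equation*}
\binom{x}{r-1}\ \le\ \binom{u}{r-1}+\binom{x'}{r-2},\qquad \binom{x'}{r-1}=\binom{x}{r}-\binom{u}{r},\qquad x\in[u,u+1),
\end{equation*}
and this inequality is in fact true. The gap is that both mechanisms you propose for proving it are wrong, and this is precisely the step you yourself flag as ``the real work.'' (i) The difference $g(x)=\binom{u}{r-1}+\binom{x'}{r-2}-\binom{x}{r-1}$ does \emph{not} decrease monotonically from $1$ to $0$. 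Take $r=3$, $u=3$: then $g(3)=1$, but differentiating the constraint $\binom{x'}{2}=\binom{x}{3}-1$ at $x=3$ (where $x'=1$) gives $dx'/dx=\frac{11/6}{1/2}=\frac{11}{3}$, hence $g'(3)=\frac{11}{3}-\frac{5}{2}=\frac{7}{6}>0$; numerically $g(3.2)\approx 1.013>1$. So $g$ rises above its left endpoint value before descending to $0$, and any argument of the form ``$g$ decreases from $1$ to $0$'' collapses. (ii) The concavity viewpoint is backwards: if $\binom{x}{r-1}$, viewed as a function of the volume $v=\binom{x}{r}$, is concave and agrees with the cascade bound at the lattice points $\binom{n}{r}$, then concavity places it \emph{above} the chord joining those points; that is a lower bound, whereas you need an upper bound against the (nonlinear) cascade expression. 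Concavity of this function alone cannot deliver the conclusion.

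So the core inequality is left genuinely unproven, and not because of sloppiness in a routine step: establishing it honestly requires a calculus argument of roughly the same weight as the paper's Lemma~\ref{l2} (endpoint evaluation plus careful control of first and second derivatives of compositions like $p_{r-2}\circ p_{r-1}^{-1}\circ p_r$), where the correct shape of the argument must accommodate a function that first increases and then decreases---for instance, by proving concavity of $g$ itself on $[u,u+1]$, which bounds $g$ below by the minimum of its endpoint values $1$ and $0$. Your stated fallback (``reproduce Lov\'asz's original direct induction'') is a pointer to a different, self-contained proof, not a proof; as written, the proposal establishes the reduction but not the theorem.
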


In a hypergraph $H$, the degree $d(v)$ of a vertex $v$ is the number of edges that contain $v$.
Let $H_v$ be the induced subgraph obtained from $H$ by deleting the vertex $v$. 
Let $G_v$ be the link graph of $v$.  
By definition of $G_v$, $d(v)$ is also the number of edges in $G_v$. 
We have the following lemma:

\begin{lemma}
 \label{GvHv}
Suppose that a connected hypergraph $H$ reaches 
the maximum spectral radius $\rho(H)$ among all $r$-uniform hypergraphs with $e$ edges.
Suppose that the Perron-Frobenius vector of $H$ reaches the maximum at a vertex $v$.
Then we have the following properties:
\begin{enumerate}
\item The shadow graph $\partial(H_v)$ of $H_v$ is a subgraph of the link graph $G_v$.
\item The link graph $G_v$ is connected while $H_v$ may be disconnected but
has only one non-trivial connected component.

\item The link graph $G_v$ has at least $f_r(e)$ edges. 
\end{enumerate}
\end{lemma}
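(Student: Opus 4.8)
The plan is to handle the three parts in the order (1), then (2) and (3), since the latter two both rest on the containment proved in (1). Throughout, write $\mathbf{x}$ for the Perron--Frobenius vector of $H$ normalized so that $x_v=\max_u x_u$, and note that by Lemma~\ref{commonvertex} and its remark this same $v$ may be taken adjacent to every other vertex. The single engine of the whole argument is Lemma~\ref{makelarger} with $k=1$: moving one edge off a vertex $u$ and onto a vertex $w$ with $x_w\geq x_u$ strictly increases $\rho$ unless it creates a repeated edge; since $H$ is a maximum hypergraph, every such move \emph{must} create a repeated edge. For part~(1) I fix $e\in E(H_v)$ (so $v\notin e$) and $w\in e$. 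If $(e\setminus\{w\})\cup\{v\}\notin E(H)$, then moving $e$ from $w$ to $v$ gives a hypergraph with $e$ edges, no repeated edge, and (as $x_v\geq x_w$) strictly larger spectral radius, contradicting maximality; hence $(e\setminus\{w\})\cup\{v\}\in E(H)$, i.e.\ $e\setminus\{w\}\in G_v$. Since $e,w$ were arbitrary, $\partial(H_v)\subseteq G_v$.

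For part~(2) I would first record the combinatorial consequence of (1) that, because $r\geq 3$, no edge of $H_v$ meets two components of $G_v$: if $f\in E(H_v)$ had vertices $a,b$ in distinct components, any third $w\in f\setminus\{a,b\}$ gives $f\setminus\{w\}\in G_v$ containing both $a,b$, forcing them into one component. Thus every edge of $H$ lies inside a single $V_i\cup\{v\}$, the $V_i$ being the vertex sets of the components of $G_v$. Now suppose $G_v$ were disconnected; pick components $D_1,D_2$, let $a,b$ maximize $\mathbf{x}$ on $V_1,V_2$ with (after relabeling) $x_a\geq x_b$, and take an edge $e_2\ni v,b$ inside $V_2\cup\{v\}$. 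Moving $e_2$ from $b$ to $a$ produces an edge containing $a\in V_1$ together with the $r-2\geq 1$ vertices of $e_2\setminus\{v,b\}\subseteq V_2$; this edge spans two $V_i$, so it is new, the move is legal, and $\rho$ strictly increases, a contradiction. The one-nontrivial-component claim for $H_v$ is identical in spirit: if $C_1,C_2$ were nontrivial components of $H_v$ with disjoint vertex sets, choose $a,b$ maximizing $\mathbf{x}$ on them with $x_a\geq x_b$ and an edge $f\ni b$ of $C_2$; moving $f$ from $b$ to $a$ creates a $v$-free edge straddling $C_1$ and $C_2$, which cannot already exist because every $v$-free edge stays inside one component of $H_v$.

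For part~(3) set $d=d(v)=|E(G_v)|$, so $H_v$ has $e-d$ edges. If $e-d=0$ the bound is immediate; otherwise write $e-d=\binom{s}{r}$ with real $s\geq r$. Part~(1) gives $\partial(H_v)\subseteq G_v$, so by Lov\'asz's shadow bound (Theorem~\ref{shadow}) $d\geq|\partial(H_v)|\geq\binom{s}{r-1}$, whence $e=d+\binom{s}{r}\geq\binom{s+1}{r}$. Writing $e=\binom{t}{r}$, so that $f_r(e)=\binom{t-1}{r-1}$ by \eqref{eq:fr}, monotonicity of $x\mapsto\binom{x}{r}$ forces $s\leq t-1$, hence $\binom{s}{r}\leq\binom{t-1}{r}$, and Pascal's identity yields
\[
 d=\binom{t}{r}-\binom{s}{r}\ \geq\ \binom{t}{r}-\binom{t-1}{r}=\binom{t-1}{r-1}=f_r(e).
\]

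The routine edge-moving in (1) is painless, and (3) is essentially a bookkeeping computation once the Lov\'asz bound is in hand. The delicate point, where I expect to spend the most care, is verifying in part~(2) that each proposed move genuinely creates no repeated edge; this rests entirely on the decomposition ``every edge lies within one $V_i\cup\{v\}$ (respectively within one component of $H_v$),'' which is exactly where part~(1) and the hypothesis $r\geq 3$ are consumed. For $r=2$ the third-vertex argument collapses and that case must be handled separately. In part~(3) the only subtlety is the passage to real-valued binomials and ensuring $s\geq r$ so that Theorem~\ref{shadow} applies, which is why the case $e=d$ is split off.
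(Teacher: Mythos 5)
Your proof is correct and follows essentially the same route as the paper's: edge-shifting via Lemma~\ref{makelarger} (together with Lemma~\ref{commonvertex}) for items 1 and 2, and Lov\'asz's shadow bound plus Pascal's identity for item 3, which is computation-for-computation the paper's argument. The only difference is that you are more explicit than the paper about why each shift creates no repeated edge (the ``every edge lies in one $V_i\cup\{v\}$'' decomposition) and about the degenerate cases $r=2$ and $e=d$, which the paper leaves implicit.
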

\begin{proof}
Let $H$ be the maximum hypergraph on vertices $v_1, v_2, \cdots, v_n$.
Let $\textbf{x}=(x_1, x_2, \cdots, x_n)$ be the Perron vector, 
in which each $x_i$ is the entry of $\textbf{x}$ corresponding to vertex $v_i$ for $i=1, 2, \cdots, n$.  Then we have $x_v\geq x_u$ for any other vertex $u$.

We will prove Item 1 by contradiction.
Suppose $\partial(H_v)$ is not a subgraph of $G_v$. 
Then there exists an $(r-1)$-subset 
$\{v_{i_1},\ldots, v_{i_{r-1}}\}$ in $\partial(H_v)$ but not in $E(G_v)$.
By the definition of the shadow $\partial(H_v)$, there is a vertex $u\not= v$
so that $\{u,v_{i_1},\ldots, v_{i_{r-1}}\}$ is an edge of $H_v$. By moving this edge
from $u$ to $v$, we obtain a new hypergraph $H'$ from $H$ with larger spectral radius
as guaranteed by Lemma \ref{makelarger}, a contradiction.

For Item 2, removing all edges of $H_v$ from $H$, the resulted hypergraph is still connected.
Thus $G_v$ has no isolated vertices. Now we will prove that $G_v$ is connected.
Otherwise, $G_v$ has at least two non-trivial connected components.
Let $\{v_{j_1}, \cdots,  v_{j_{r-1}}\}$ and  $\{v_{k_1}, \cdots,  v_{k_{r-1}}\}$
 be any two edges  from different connected components of $G_v$.
Here we assume the vertices are ordered non-increasingly according to the Perron-Fronenius
vector $\bf x$; that is $x_{j_1}\geq x_{j_2}\geq\cdots\geq x_{j_{r-1}}$ and $x_{k_1}\geq x_{k_2}\geq\cdots\geq x_{k_{r-1}}$. We also assume that $x_{j_1}\geq x_{k_1}$.
By Lemma \ref{makelarger},
We can move the edge $\{v, v_{k_1}, \cdots,  v_{k_{r-1}}\}$ from $v_{k_1}$ to $v_{j_1}$ to increase
the spectral radius. Contradiction.
A similar argument can show that $H_v$ has only one non-trivial component.

For Item 3, we write $e={s\choose r}$
and $|E(H_v)|={y \choose r}$ for some real numbers $s, y\geq r-1$.
By Theorem \ref{shadow}, we have 
$$|\partial(H_v)|\geq {y \choose r-1}.$$
Applying Lemma \ref{GvHv}, we have 
$$|E(G_v)|\geq |\partial(H_v)|\geq
 {y \choose r-1}.$$ 
Thus,
\begin{align*}
{s\choose r}&=|E(H)| \\
&=|E(G_v)|+|E(H_v)|\\
&\geq  {y \choose r-1} +{y \choose r}\\
&={y+1\choose r}. 
\end{align*}
Thus, $s\geq y+1$. It implies 
\begin{align*}
|E(G_v)|&=e-|E(H_v)|\\
&={s\choose r}-{y\choose r} \\
&\geq {s\choose r}-{s-1\choose r}\\
&={s-1\choose r-1}\\
&=f_r(e).
\end{align*}
The proof is finished.
\end{proof}

In \cite{LuMan}, Lu and Man discovered a novel way to link the spectral radius to 
$\alpha$-normal labeling of any connected hypergraph.
\begin{definition}
\cite{LuMan}
 A weighted incidence matrix $B$ of a hypergraph $H$ is a 
$|V | \times |E|$ matrix such that for any vertex $v$ and any edge $e$, the entry $B(v, e) > 0$ if $v\in e$ and $B(v, e) = 0$ if $ v \notin e$.
\end{definition}

\begin{definition}
\cite{LuMan}
 \begin{enumerate}
 \item 
A hypergraph $H$ is called $\alpha$-normal if there exists a weighted incidence matrix $B$ satisfying
\begin{enumerate}
 \item $\sum_{e:v \in e} B(v, e)= 1$, for any $v\in V(H)$. 
 \item $\prod_{v \in e} B(v, e)=\alpha$, for any $e\in E(H)$. 
 \end{enumerate}
Moreover, the incidence matrix $B$ is called consistent if for any cycle $v_0e_1v_1\cdots v_l (v_l=v_0)$, 
$$ \prod\limits_{i=1}^{l} \frac{B(v_i, e_i)}{B(v_{i-1}, e_i)}=1.$$
In this case, $H$ is called consistently $\alpha$-normal.

\item  A hypergraph $H$ is called $\alpha$-subnormal if there exists a weighted incidence matrix $B$ satisfying
 \begin{enumerate}
 \item $\sum_{e:v \in e} B(v, e)\leq 1$, for any $v\in V(H)$. 
 \item $\prod_{v \in e} B(v, e)\geq\alpha$, for any $e\in E(H)$. 
 \end{enumerate}
 Moreover, $H$ is called strictly $\alpha$-subnormal if it is $\alpha$-subnormal but not $\alpha$-normal. 
  \end{enumerate}
\end{definition}
Definitions about $α\alpha$-supernormal hypergraph is defined in \cite{LuMan}, but we omit it since it is irrelevant. 

\begin{lemma}\label{connectedgraph}
\cite{LuMan}
Let $H$ be a connected $r$-uniform hypergraph. Then the spectral radius of $H$ is $\rho(H)$ if and only if $H$ is consistently $\alpha$-normal with $\alpha = \left(1/\rho(H)\right)^r.$
\end{lemma}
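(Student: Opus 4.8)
The plan is to prove both implications by translating back and forth between a Perron--Frobenius eigenvector of $H$ and the weighted incidence matrix $B$. The bridge in both directions is the single explicit identity
\[
B(v,e) = \frac{\prod_{u \in e\setminus\{v\}} x_u}{\rho(H)\, x_v^{r-1}},
\]
which ties an entry of $B$ to the coordinates of a positive eigenvector $\mathbf{x}$. Once this correspondence is in place, each of the three defining conditions of a consistent $\alpha$-normal labeling matches one structural feature of $\mathbf{x}$: the normalization on vertices matches the eigenequation, the normalization on edges matches the value of $\alpha$, and the cycle condition matches the mere existence of a globally defined $\mathbf{x}$.

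For the forward direction I would invoke that a connected $H$ has weakly irreducible adjacency tensor, so by the Perron--Frobenius theorem there is $\mathbf{x}\in\mathbb{R}^n_{++}$ with $A\mathbf{x}^{r-1}=\rho(H)\mathbf{x}^{[r-1]}$; unwinding the $1/(r-1)!$ normalization, the $v$-th equation reads $\sum_{e\ni v}\prod_{u\in e\setminus\{v\}}x_u=\rho(H)\,x_v^{r-1}$. Defining $B$ by the displayed formula, condition (a), $\sum_{e\ni v}B(v,e)=1$, is just a restatement of this eigenequation. For condition (b) a short exponent count shows each $x_u$ occurs $r-1$ times in $\prod_{v\in e}\prod_{u\in e\setminus\{v\}}x_u$, whence $\prod_{v\in e}B(v,e)=\rho(H)^{-r}=\alpha$. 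Consistency is the quickest part: along a cycle the ratio $B(v_i,e_i)/B(v_{i-1},e_i)$ collapses to $(x_{v_{i-1}}/x_{v_i})^r$, and the product telescopes to $1$ because the cycle returns to its starting vertex.

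For the converse I would reverse-engineer $\mathbf{x}$ from a consistent $\alpha$-normal matrix $B$ with $\alpha=\rho^{-r}$. Guided by the displayed formula, for any two vertices $v,w$ of a common edge $e$ I set $x_v/x_w := (B(w,e)/B(v,e))^{1/r}$. The decisive use of consistency is in showing these edgewise ratios glue into a single positive vector on $V$: fixing one coordinate and propagating along paths (possible since $H$ is connected), the consistency identity guarantees that the product of ratios around any cycle equals $1$, so the value assigned to each vertex is path-independent and $\mathbf{x}\in\mathbb{R}^n_{++}$ is well defined. By construction $B(v,e)\,x_v^r=B(w,e)\,x_w^r$ for $v,w\in e$, so this common value $c_e$ depends only on $e$; substituting $B(v,e)=c_e/x_v^r$ into condition (b) pins down $c_e=\rho^{-1}\prod_{u\in e}x_u$, which is exactly the displayed formula. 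Condition (a) then becomes $\sum_{e\ni v}\prod_{u\in e\setminus\{v\}}x_u=\rho\,x_v^{r-1}$, so $\rho$ is an $H^{++}$-eigenvalue of the positive eigenvector $\mathbf{x}$; weak irreducibility of the connected $H$ together with Perron--Frobenius forces $\rho=\rho(H)$.

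I expect the main obstacle to be precisely the gluing step in the converse, namely upgrading the purely local, edgewise ratio data into a globally well-defined positive eigenvector. This is the one place where the consistency hypothesis is indispensable, and the telescoping computation around an arbitrary cycle is what makes the propagation independent of the chosen path. Everything else reduces to two routine algebraic identities --- the exponent count yielding condition (b) and the substitution yielding condition (a) --- together with a single appeal to the Perron--Frobenius theorem to certify that the reconstructed positive eigenvalue is the spectral radius and not some smaller eigenvalue.
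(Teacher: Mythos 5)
The paper does not prove this lemma at all --- it is quoted from the cited work of Lu and Man --- so there is no in-paper argument to compare against. Your proof is correct and complete, and it is essentially the standard argument for this equivalence: the dictionary $B(v,e)=\prod_{u\in e\setminus\{v\}}x_u/(\rho\,x_v^{r-1})$ converts the eigenequation into the vertex condition, the exponent count gives the edge condition, the telescoping product gives consistency, and in the converse the consistency condition is exactly what makes the edgewise ratios $(B(w,e)/B(v,e))^{1/r}$ propagate to a well-defined positive vector, after which weak irreducibility of the adjacency tensor of a connected hypergraph (Pearson--Zhang) plus the Perron--Frobenius theorem identifies the reconstructed eigenvalue with $\rho(H)$. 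The one point worth making explicit is that path-independence already requires the consistency identity for closed walks of length two (two edges sharing a pair of vertices), which your cycle condition covers; with that noted, nothing is missing.
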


\begin{lemma} \label{subtheorem}
\cite{LuMan} Let $H$ be an $r$-uniform hypergraph. 
\begin{enumerate}
\item If $H$ is consistently $\alpha$-normal, then the spectral radius of H
satisfies
$$\rho(H)=\alpha^{-\frac{1}{r}}. $$
\item If $H$ is $\alpha$-subnormal, then the spectral radius of H
satisfies
$$\rho(H)\leq \alpha^{-\frac{1}{r}}. $$
\end{enumerate}

\end{lemma}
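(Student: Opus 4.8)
The plan is to treat the two parts separately, with the inequality in part~2 (the $\alpha$-subnormal bound) carrying the analytic weight and part~1 (the equality for consistently $\alpha$-normal $H$) following from part~2 together with an explicit construction of a Perron eigenvector. Throughout I would use the variational description of the spectral radius recorded above, namely $\rho(H)=\max_{\Vert\mathbf{x}\Vert_r=1} r\sum_{e\in E(H)}\prod_{v\in e}x_v$, and the fact that because the adjacency tensor is nonnegative and symmetric this maximum is attained at a nonnegative vector, so it suffices to bound $r\sum_{e}\prod_{v\in e}x_v$ for $\mathbf{x}\geq\mathbf{0}$.

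For part~2, suppose $B$ is a weighted incidence matrix witnessing that $H$ is $\alpha$-subnormal, and fix $\mathbf{x}\geq\mathbf{0}$. The key is a per-edge application of the weighted AM--GM inequality after extracting the edge-product of $B$. For each edge $e$ I would write
$$\prod_{v\in e}x_v=\Big(\prod_{v\in e}B(v,e)\Big)^{-1/r}\prod_{v\in e}\big(B(v,e)^{1/r}x_v\big),$$
bound the first factor by $\alpha^{-1/r}$ using $\prod_{v\in e}B(v,e)\geq\alpha$ and the monotonicity of $t\mapsto t^{-1/r}$, and apply AM--GM to the second factor, $\prod_{v\in e}(B(v,e)x_v^r)^{1/r}\leq \tfrac1r\sum_{v\in e}B(v,e)x_v^r$. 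This yields $\prod_{v\in e}x_v\leq \tfrac{\alpha^{-1/r}}{r}\sum_{v\in e}B(v,e)x_v^r$. Summing over all edges and interchanging the order of summation to group terms by vertex turns the right-hand side into $\tfrac{\alpha^{-1/r}}{r}\sum_{v}x_v^r\sum_{e\ni v}B(v,e)$, and the vertex condition $\sum_{e\ni v}B(v,e)\leq 1$ collapses this to $\tfrac{\alpha^{-1/r}}{r}\sum_v x_v^r$. Multiplying by $r$ gives $r\sum_e\prod_{v\in e}x_v\leq \alpha^{-1/r}\Vert\mathbf{x}\Vert_r^r$, and taking $\Vert\mathbf{x}\Vert_r=1$ shows $\rho(H)\leq\alpha^{-1/r}$.

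For part~1, a consistently $\alpha$-normal $H$ is in particular $\alpha$-subnormal (with equalities throughout), so part~2 immediately gives $\rho(H)\leq\alpha^{-1/r}$; it remains to produce a nonnegative eigenvector with eigenvalue $\alpha^{-1/r}$ for the reverse inequality. Positing $B(v,e)=C_e/x_v^r$, the edge condition $\prod_{v\in e}B(v,e)=\alpha$ pins down $C_e=\alpha^{1/r}\prod_{u\in e}x_u$, so that the vertex condition $\sum_{e\ni v}B(v,e)=1$ becomes exactly $\sum_{e\ni v}\prod_{u\in e\setminus\{v\}}x_u=\alpha^{-1/r}x_v^{r-1}$, the eigenequation with $\lambda=\alpha^{-1/r}$. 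The role of consistency is to guarantee that the ratio constraints $x_w^r/x_v^r=B(v,e)/B(w,e)$ (for $v,w$ in a common edge $e$) can be realized by a single positive vector $\mathbf{x}$: I would fix a spanning tree, define the $x_v$ by propagating ratios along tree paths from a root, and then use the cycle condition $\prod_i B(v_i,e_i)/B(v_{i-1},e_i)=1$ to check that the values are independent of the chosen path, hence well defined. Finally, since $H$ is connected its adjacency tensor is weakly irreducible, so the positive eigenvector $\mathbf{x}$ must correspond to the spectral radius by the Perron--Frobenius theorem, giving $\rho(H)=\alpha^{-1/r}$.

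I expect the construction of $\mathbf{x}$ from a consistent labeling to be the main obstacle. The inequality in part~2 is a clean one-line AM--GM once the right factorization is chosen, but part~1 requires turning the combinatorial consistency hypothesis into a genuine global assignment of vertex values: one must verify that the spanning-tree propagation is unambiguous precisely when the cycle products are trivial, handle the normalization (the vector is determined only up to a positive scalar, which is harmless), and address connectivity so that Perron--Frobenius applies, with a disconnected $H$ handled component by component. Care is also needed at boundary cases where some $x_v$ could vanish, which the positivity $B(v,e)>0$ together with weak irreducibility rule out.
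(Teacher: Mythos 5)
Your proof is correct. The paper itself gives no proof of this lemma (it is quoted from the cited reference of Lu and Man), and your argument --- the per-edge AM--GM after factoring out $\bigl(\prod_{v\in e}B(v,e)\bigr)^{-1/r}$ for the subnormal upper bound, and the spanning-tree propagation of the ratios $B(v,e)/B(w,e)$ into a positive eigenvector for the consistently normal equality case --- is essentially the standard argument from that source.
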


\section{Lemmas on the function $f_r(x)$}
For a fixed positive integer $r$, consider the polynomial $p_r(x)=\frac{x(x-1)\cdots (x-r+1)}{r!}.$
Since the binomial coefficient ${n\choose r}=p_r(n)$, we view ${x\choose r}$
as the polynomial $p_r(x)$.
Note that $p_r(x)$ is an increasing function over the interval $[r-1,\infty)$
so that the inverse function exists. Let $p^{-1}_r\colon [0,\infty) \to
[r-1, \infty)$ denote the inverse function of
$p_r(x)$ (when restricted to the interval $[r-1, \infty)$.
We define a function $f_r\colon [0,\infty) \to [1,\infty)$ as follows:
$$f_r(x):= p_{r-1}(p_{r}^{-1}(x)-1).$$
Thus $f_r(x)$ satisfies Equation (\ref{eq:fr}). 
This function plays an essential role in this paper. It has the following properties.

\begin{lemma}\label{l1}
  Suppose $y={\eta \choose r}$ with $\eta\geq r-1$. Then we have
  \begin{enumerate}
  \item $\eta=\frac{ry}{f_r(y)}$.
  \item $f_r(y)$ is an increasing function on $[0,\infty)$.
  \item The derivative of $f_r(y)$ is given by:
$$f_r'(y)=\frac{r}{\eta}\frac{\sum_{i=1}^{r-1}\frac{1}{\eta-i}}{\sum_{i=0}^{r-1}\frac{1}{\eta-i}}.$$
  \end{enumerate}
\end{lemma}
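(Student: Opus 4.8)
The plan is to push every assertion back through the substitution $\eta=p_r^{-1}(y)$, which turns the opaque composition $f_r(y)=p_{r-1}(p_r^{-1}(y)-1)$ into the transparent identity $f_r(y)=p_{r-1}(\eta-1)=\binom{\eta-1}{r-1}$; this is legitimate precisely because the hypothesis $y=\binom{\eta}{r}$ with $\eta\geq r-1$ forces $p_r^{-1}(y)=\eta$. Once this substitution is in place, everything reduces to elementary manipulations of the falling-factorial polynomial $p_r(x)=\frac{1}{r!}\prod_{i=0}^{r-1}(x-i)$. For Item 1, I would factor the leading variable out of the product: $p_r(\eta)=\frac{\eta}{r}\cdot\frac{1}{(r-1)!}\prod_{i=1}^{r-1}(\eta-i)=\frac{\eta}{r}\,p_{r-1}(\eta-1)$. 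Since the left-hand side equals $y$ and $p_{r-1}(\eta-1)=f_r(y)$, this reads $y=\frac{\eta}{r}f_r(y)$, and solving for $\eta$ yields $\eta=ry/f_r(y)$. This is just the real-variable form of $\binom{\eta}{r}=\frac{\eta}{r}\binom{\eta-1}{r-1}$, and it will serve as the workhorse for the rest.

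For Item 2, I would argue purely by composition: on $[0,\infty)$ the inverse $p_r^{-1}$ is increasing with range $[r-1,\infty)$, so $\eta-1$ ranges over $[r-2,\infty)$, and on that range $p_{r-1}$ is increasing. Hence $f_r$ is a composition of increasing maps and is itself increasing, and this argument is valid on the whole interval, endpoints included.

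For Item 3, I would differentiate through the substitution. By the inverse-function rule $d\eta/dy=1/p_r'(\eta)$, so the chain rule gives $f_r'(y)=p_{r-1}'(\eta-1)/p_r'(\eta)$. I would then invoke the logarithmic derivative of a product, namely $p_r'(x)=p_r(x)\sum_{i=0}^{r-1}\frac{1}{x-i}$ and $p_{r-1}'(x)=p_{r-1}(x)\sum_{i=0}^{r-2}\frac{1}{x-i}$, evaluated at $\eta$ and $\eta-1$ respectively. Reindexing the second sum (the values $\eta-1-i$ for $0\leq i\leq r-2$ are exactly $\eta-j$ for $1\leq j\leq r-1$) and recalling $p_{r-1}(\eta-1)=f_r(y)$ and $p_r(\eta)=y$, this collapses to $f_r'(y)=\frac{f_r(y)}{y}\cdot\frac{\sum_{i=1}^{r-1}1/(\eta-i)}{\sum_{i=0}^{r-1}1/(\eta-i)}$; substituting $f_r(y)/y=r/\eta$ from Item 1 produces the claimed formula.

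The only place where a slip is likely is the bookkeeping in Item 3: correctly threading the inverse-function derivative through the composition and reindexing the logarithmic-derivative sum so that the $p_{r-1}(\eta-1)=f_r(y)$ and $p_r(\eta)=y$ factors surface and can be cleaned up via Item 1. The binomial identity of Item 1 and the monotonicity of Item 2 are both routine once the substitution $\eta=p_r^{-1}(y)$ is fixed. A minor honesty check worth recording is that the closed form for the derivative is valid for $y>0$, where $\eta>r-1$ and every factor $\eta-i$ in the denominators is nonzero, whereas the monotonicity of Item 2 holds across the entire interval.
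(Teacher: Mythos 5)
Your proof is correct and follows essentially the same route as the paper: Item 1 via the identity $\binom{\eta}{r}=\frac{\eta}{r}\binom{\eta-1}{r-1}$, and Item 3 via the logarithmic derivative $p_r'(x)=p_r(x)\sum_{j=0}^{r-1}\frac{1}{x-j}$ combined with the chain/inverse-function rule. The only divergence is Item 2, where the paper deduces monotonicity from the positivity of the derivative formula in Item 3 while you argue by composition of increasing maps; your version is marginally cleaner since it also covers the endpoint $y=0$, where the derivative formula is not literally defined.
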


\begin{proof}
The formula $f_r(y)={\eta-1 \choose r-1}= \frac{r}{\eta} {\eta \choose r}$ implies item 1. 
Note $$\ln p_r(x)=\sum_{j=0}^{r-1}\ln(x-j) -\ln (r!).$$
We have
$$p_r(x)'= p_r(x) \cdot \frac{d}{dx}\left[\sum_{j=0}^{r-1}\ln(x-j) -\ln (r!)\right]
= {x \choose r}\sum_{j=0}^{r-1} \frac{1}{x-j} .$$

View $\eta$ as a function of $y$ and apply the Chain rule. We have
\begin{align*}
f_r'(y)={\eta-1 \choose r-1}'
=  \frac{df_r}{d\eta}\frac{d\eta}{dy}
= \frac{\frac{df_r}{d\eta}}{\frac{dy}{d\eta}}
= \frac{{\eta-1 \choose r-1}\sum^{r-1}_{i=1} \frac{1}{\eta-i}}{{\eta \choose r}\sum^{r-1}_{i=0} \frac{1}{\eta-i}}
= \frac{r}{\eta}\frac{\sum^{r-1}_{i=1} \frac{1}{\eta-i}}{\sum^{r-1}_{i=0} \frac{1}{\eta-i}}.
\end{align*}
Since $\eta>r-1$,
the right-hand side of $f_r'(y)$ is positive. Thus $f_r(y)$ is an increasing function. 
\end{proof}

For convenience, we also define $f_1$ to be the constant function $f_1(x)\equiv 1$.
\begin{lemma}\label{l2}
For an integer $r\geq 2$ and any two reals $e$ and $x$ with $e\geq x\geq f_r(e)$,  we have
\begin{equation}
  \label{eq:F}
  \frac{x^{1/(r-1)} f_{r-1}(x)}{f_r(e)^{r/(r-1)}} + \frac{f_r(e-x)}{f_r(e)}\leq 1.
\end{equation}
\end{lemma}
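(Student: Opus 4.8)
The plan is to fix $e$ and regard the left-hand side of the asserted inequality as a function of the single variable $x$,
\[
F(x) = \frac{x^{1/(r-1)}f_{r-1}(x)}{f_r(e)^{r/(r-1)}} + \frac{f_r(e-x)}{f_r(e)}, \qquad x\in[f_r(e),e],
\]
and to show that $F$ attains its maximum at the left endpoint $x=f_r(e)$, where $F=1$. Writing $\lambda=f_r(e)$ and clearing the common denominator, the claim is equivalent to $\left(x/\lambda\right)^{1/(r-1)}f_{r-1}(x)\le \lambda-f_r(e-x)$; that is, the increment $\lambda-f_r(e-x)$ of $f_r$ over the last $x$ units must dominate the first term. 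So the two things to establish are the boundary value $F(f_r(e))=1$ and the monotonicity $F'\le 0$ on $(f_r(e),e)$.

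First I would pin down the boundary value. Parametrize $e=\binom{s}{r}$, so that $\lambda=f_r(e)=\binom{s-1}{r-1}$. At $x=\lambda=\binom{s-1}{r-1}$ the factor $x^{1/(r-1)}/\lambda^{r/(r-1)}$ collapses to $\lambda^{-1}$, while $e-x=\binom{s}{r}-\binom{s-1}{r-1}=\binom{s-1}{r}$ by Pascal's rule, whence $f_r(e-x)=\binom{s-2}{r-1}$ and $f_{r-1}(\lambda)=\binom{s-2}{r-2}$. The two terms then sum to $\bigl(\binom{s-2}{r-2}+\binom{s-2}{r-1}\bigr)/\binom{s-1}{r-1}=1$, again by Pascal. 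This is exactly the extremal (complete-hypergraph) configuration, so equality on the diagonal $x=f_r(e)$ is forced and the real content is the behaviour for $x>f_r(e)$.

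Next I would prove $F'(x)\le 0$. Differentiating and using Lemma~\ref{l1}(3) for both $f_{r-1}'$ and $f_r'$ together with the power rule on $x^{1/(r-1)}$, the inequality $F'\le 0$ becomes, after multiplying by $\lambda$,
\[
g'(x)\le f_r(e)^{1/(r-1)}\,f_r'(e-x),\qquad g(x):=x^{1/(r-1)}f_{r-1}(x).
\]
Setting $x=\binom{t}{r-1}$ and $e-x=\binom{\sigma}{r}$ and inserting the closed forms of Lemma~\ref{l1} turns this into an explicit inequality among the three parameters $s,t,\sigma$ (linked by the single relation $\binom{t}{r-1}+\binom{\sigma}{r}=\binom{s}{r}$), in which the harmonic-type sums $\sum_i 1/(t-i)$ and $\sum_i 1/(\sigma-i)$ appear. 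Rather than attack this three-parameter inequality head on, I would prove $F'\le 0$ in two cleaner steps: (i) check $F'(f_r(e))\le 0$ at the left endpoint, a one-parameter computation; and (ii) prove that $F$ is concave on $[f_r(e),e]$, i.e. $F''\le 0$, using the explicit first and second logarithmic derivatives of $f_r,f_{r-1}$ coming from $\ln p_r(x)=\sum_j\ln(x-j)-\ln r!$. Since a concave function whose derivative is non-positive at its left endpoint is non-increasing, (i) and (ii) give $F'\le 0$ throughout, and hence $F\le F(f_r(e))=1$.

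The main obstacle is step (ii). The second summand $f_r(e-x)/f_r(e)$ is concave in $x$ because $f_r$ is concave (its derivative in Lemma~\ref{l1}(3) is decreasing), but the first summand $x^{1/(r-1)}f_{r-1}(x)$ is a product whose convexity type is delicate, so the second-order estimates must be shown to point the right way; this is where essentially all the work lies. A convenient handle on the endpoint $x\to e$, where the second term vanishes, is the clean one-variable sub-inequality $f_r(z)^{r}\ge z\,f_{r-1}(z)^{r-1}$, with equality iff $z=1$: writing $z=\binom{a}{r}=\binom{b}{r-1}$ and using $f_r(z)=rz/a$ and $f_{r-1}(z)=(r-1)z/b$ from Lemma~\ref{l1}(1) reduces it to $r^{r}b^{r-1}\ge (r-1)^{r-1}a^{r}$ under the constraint $\binom{a}{r}=\binom{b}{r-1}$, i.e. to the comparison of $\bigl(p_{r-1}^{-1}(z)/(r-1)\bigr)^{r-1}$ with $\bigl(p_{r}^{-1}(z)/r\bigr)^{r}$, which one verifies directly. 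Finally, the case $r=2$ (where $f_1\equiv 1$ and $f_2(e)=(\sqrt{8e+1}-1)/2$) serves as a base and sanity check, and already recovers Stanley's quantity.
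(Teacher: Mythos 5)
Your overall strategy is exactly the one the paper uses: parametrize $e=\binom{s}{r}$, verify the boundary value $F(f_r(e))=1$ by Pascal's rule, and then deduce $F'\le 0$ on the whole interval from the two facts $F'(f_r(e))<0$ and $F''\le 0$. Your boundary computation is correct and matches the paper's. However, the proposal stops short of proving either of the two analytic facts that carry essentially all of the content of the lemma. You state that you ``would'' check $F'(f_r(e))\le 0$ and that the concavity $F''\le 0$ is ``where essentially all the work lies,'' but you do not carry out either step, and these are precisely the steps the paper spends several pages on. In particular, the concavity of the first summand $x^{1/(r-1)}f_{r-1}(x)/f_r(e)^{r/(r-1)}$ is \emph{not} true term-by-term: the paper's argument only closes because the negative second-derivative contribution of the term $f_r(e-x)/f_r(e)$ dominates the possibly positive contribution of the first term, and establishing this requires the substitutions $x=\binom{t}{r-1}$, $e-x=\binom{u}{r}$, the derivative formulas $dt/dx$ and $du/dx$, the estimates $\sum_{i=0}^{r-2}(t-i)^{-2}\ge (r-1)/t^2$ and $\sum_{i=0}^{r-1}(u-i)^{-1}\le r/(u-r+1)$, and, crucially, the ordering $t\ge s-1\ge u$ derived from Pascal's rule. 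None of this appears in the proposal, so the claim $F''\le 0$ is unsupported.

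The auxiliary inequality you offer, $f_r(z)^r\ge z\,f_{r-1}(z)^{r-1}$, amounts only to checking $F(e)\le 1$ at the right endpoint (where $f_r(0)=0$); a bound at both endpoints does not control $F$ on the interior without the concavity or monotonicity statement, so it cannot substitute for the missing step. To complete the argument you would need to supply the full second-derivative computation (or some alternative proof that $F$ is decreasing), which is the heart of the paper's proof of this lemma.
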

\begin{proof}
Let $F(x)=\frac{x^{1/(r-1)} f_{r-1}(x)}{f_r(e)^{r/(r-1)}} + \frac{f_r(e-x)}{f_r(e)}$. Note $F(x)$ is a smooth function. 
To show $F(x)\leq 1$ for all $x\in [f_r(e),e]$, it is sufficient to prove the following facts:
\begin{enumerate}
\item $F(f_r(e))=1$. \label{item1}
\item $F'(f_r(e))<0$. \label{item2}
\item $F''(x)\leq 0$ for any $x\in [f_r(e), e]$.\label{item3}
\end{enumerate}

Note that item 3 indicates that $F'(x)$ is a decreasing function 
on $[f_r(e), e)$, together with item 2,  we get $F'(x) < 0$,  implies that $F(x)$ is a strictly decreasing function on $[f_r(e), e]$.   
By item 1, we conclude that $F(x)\leq 1$ for $x\in [f_r(e), e]$, with the inequality holds if and only if $x=f_r(e)$.

Let $s$, $t$, and $u$ be three positive reals satisfying
$e={s\choose r}$, $x={t\choose r-1}$, and $e-x={u\choose r}$.
Since $x={t\choose r-1}\geq f_r(e)={s-1\choose r-1}$,
we have $t\geq s-1$. 
Similarly, we have $${u\choose r}= e-x ={s\choose r}-{t\choose r-1}\leq {s\choose r}-{s-1\choose r-1}
={s-1\choose r}.$$ 
It implies that $u\leq s-1$. 
Thus, we have
\begin{equation}
  \label{eq:tsu}
  t\geq s-1\geq u,
\end{equation}
with the equality holds if and only if $x=f_r(e)={s-1\choose r-1}$.

We have
\begin{align*}
F(f_r(e))
& = \frac{{s-1 \choose r-1}^{1/(r-1)} {s-2 \choose r-2}}{{s-1\choose r-1}^{r/(r-1)}} + \frac{{s-2\choose r-1}{s-1\choose r-1}^{1/(r-1)}}{{s-1\choose r-1}^{r/(r-1)}} \\
& = \frac{{s-1 \choose r-1}^{1/(r-1)} \left[{s-2 \choose r-2}+{s-2\choose r-1}\right]}{{s-1\choose r-1}^{r/(r-1)}} \\
& = \frac{{s-1 \choose r-1}^{1/(r-1)} {s-1 \choose r-1}}{{s-1\choose r-1}^{r/(r-1)}}\\
& =1.
\end{align*}
Proof of item 1 is finished. 

Now we compute the derivative of $F(x)$. Note that $e$ and $s$ are constants
while $t$ and $u$ are functions of $x$.
Applying item 1 of Lemma \ref{l1} to $x={t\choose r-1}$ 
and $x=-{u\choose r}+e$, 
we get
\begin{align*}
  f_{r-1}'(x) &=\frac{(r-1)\sum_{i=1}^{r-2}\frac{1}{t-i}}{t\sum_{i=0}^{r-2}\frac{1}{t-i}},\\
f_r'(e-x) &= 
-\frac{r\sum_{i=1}^{r-1}\frac{1}{u-i}}{u\sum_{i=0}^{r-1}\frac{1}{u-i}}.
\end{align*}
Thus, we have
 \begin{align*} \label{eq:F'}
F'(x)= &\frac{\frac{1}{r-1}x^{1/(r-1)-1}f_{r-1}(x)+x^{1/(r-1)}\frac{(r-1)\sum_{i=1}^{r-2}\frac{1}{t-i}}{t\sum_{i=0}^{r-2}\frac{1}{t-i}}
 }{f_r(e)^{r/(r-1)}} - \frac{r\sum_{i=1}^{r-1}\frac{1}{u-i}}{f_r(e)u\sum_{i=0}^{r-1}\frac{1}{u-i}}\\[2mm]
&= \frac{\frac{1}{r-1}x^{1/(r-1)-1}\frac{x(r-1)}{t}+x^{1/(r-1)}\frac{(r-1)\sum_{i=1}^{r-2}\frac{1}{t-i}}{t\sum_{i=0}^{r-2}\frac{1}{t-i}}}{f_r(e)^{r/(r-1)}} - \frac{r}{f_r(e)}\frac{\sum_{i=1}^{r-1}\frac{1}{u-i}}{u\sum_{i=0}^{r-1}\frac{1}{u-i}}\\[2mm]
& = \frac{x^{1/(r-1)}}{f_r(e)^{r/(r-1)}}\left(\frac{1}{t}+\frac{(r-1)\sum_{i=1}^{r-2}\frac{1}{t-i}}{t\sum_{i=0}^{r-2}\frac{1}{t-i}} \right) -  \frac{r}{f_r(e)}\frac{\sum_{i=1}^{r-1}\frac{1}{u-i}}{u\sum_{i=0}^{r-1}\frac{1}{u-i}}\\[2mm]
&= \frac{x^{1/(r-1)}}{f_r(e)^{r/(r-1)}}\left[\frac{r}{t}-\frac{r-1}
{t^2\sum_{i=0}^{r-2}\frac{1}{t-i}}\right]
- \frac{1}{f_r(e)}\left[\frac{r}{u}-\frac{r}{u^2\sum_{i=0}^{r-1}\frac{1}{u-i}}\right]. 
 \end{align*}
 When $x=f_r(e)$, by equations $t=u=s-1$, we replace $u$ by $t$  for convenience.
\begin{align*}
F'(f_r(e))
& = \frac{f_r(e)^{1/(r-1)}}{f_r(e)^{r/(r-1)}}\left[\frac{r}{t}-\frac{r-1}
{t^2\sum_{i=0}^{r-2}\frac{1}{t-i}}\right]
- \frac{1}{f_r(e)}\left[\frac{r}{t}-\frac{r}{t^2\sum_{i=0}^{r-1}\frac{1}{t-i}}\right]\\[2mm]
&=\frac{1}{f_r(e)}
\left[\frac{r}{t}-\frac{r-1}
{t^2\sum_{i=0}^{r-2}\frac{1}{t-i}}- \frac{r}{t} + \frac{r}{t^2\sum_{i=0}^{r-1}\frac{1}{t-i}}\right]\\[2mm]
&= \frac{1}{f_r(e)}
\left[\frac{1}{t^2}\left(\frac{r}{\sum_{i=0}^{r-1}\frac{1}{t-i}}-\frac{r-1}
{\sum_{i=0}^{r-2}\frac{1}{t-i}}\right)\right]\\
&=\frac{1}{f_r(e) t^2}\frac{\sum_{i=0}^{r-2}\left(\frac{1}{t-i}-\frac{1}{t-r+1}\right)}{(\sum_{i=0}^{r-2}\frac{1}{t-i})(\sum_{i=0}^{r-1}\frac{1}{t-i})}\\
&<0.
\end{align*}
Proof of item 2 is finished. 

Let us compute the second derivative. Since $x={t\choose r-1}$, we have
\begin{equation}
  \label{eq:dt}
  \frac{dt}{dx}=\frac{1}{\frac{dx}{dt}}=\frac{1}{x\sum_{i=0}^{r-2}\frac{1}{t-i}}.
\end{equation}
Similarly, from $e-x={u\choose r}$, we get
\begin{equation}
  \label{eq:du}
  \frac{du}{dx}=-\frac{1}{(e-x)\sum_{i=0}^{r-1}\frac{1}{u-i}}.
\end{equation}

To simplify the above equation, we compute the derivative of each main term separately, the derivative of first main term $x^{1/(r-1)}\left[ \frac{r}{t}- \frac{r-1}{t^2\sum_{i=0}^{r-2}\frac{1}{t-i}}\right]$ is
\begin{align*}
& \hspace*{.5cm} \left( x^{1/(r-1)}\left[ \frac{r}{t}- \frac{r-1}{t^2\sum_{i=0}^{r-2}\frac{1}{t-i}}\right]\right)'\\[2mm] 
& =\frac{x^{1/(r-1)-1}}{r-1} \left[ \frac{r}{t}- \frac{r-1}{t^2\sum_{i=0}^{r-2}\frac{1}{t-i}}\right] + x^{1/(r-1)} \frac{dt}{dx} \left[ -\frac{r}{t^2}+(r-1)\frac{1}{t^4(\sum_{i=0}^{r-2}\frac{1}{t-i})^2}\right.\\[2mm]
&\hspace*{.5cm}\left. \left(2t\sum_{i=0}^{r-2}\frac{1}{t-i} -
 t^2\sum_{i=0}^{r-2}\frac{1}{(t-i)^2} \right)\right]\\[2mm]
 & =\frac{x^{1/(r-1)-1}}{t^2}\left[ \frac{rt}{r-1}- \frac{r+1}{\sum_{i=0}^{r-2}\frac{1}{t-i}} + \frac{2t(r-1)}{t(\sum_{i=0}^{r-2}\frac{1}{t-i})^2}-\frac{(r-1)\sum_{i=0}^{r-2}\frac{1}{(t-i)^2}}{(\sum_{i=0}^{r-2}\frac{1}{t-i})^3}\right].
\end{align*}
Similar work for the derivative of the second main term $\frac{1}{u}-\frac{1}{u^2\sum_{i=0}^{r-1}\frac{1}{u-i}}$, we have: 
\begin{align*}
& \hspace*{.5cm} \left(\frac{1}{u}-\frac{1}{u^2\sum_{i=0}^{r-1}\frac{1}{u-i}}\right)'\\[2mm]
&= -\frac{1}{ (e-x) u^2 \sum_{i=0}^{r-1}\frac{1}{u-i}}
\left[-1 +\frac{2}{u\sum_{i=0}^{r-1}\frac{1}{u-i}}-\frac{\sum_{i=0}^{r-1}\frac{1}{(u-i)^2}}{\left(\sum_{i=0}^{r-1}\frac{1}{u-i}\right)^2}\right].
\end{align*} 

After simplification, we have
\begin{align*}
F''(x)&= \frac{x^{1/(r-1)-1}}{t^2 f_r(e)^{r/(r-1)}}\left[\frac{rt}{r-1}
-\frac{r+1}{\sum_{i=0}^{r-2}\frac{1}{t-i}}
+\frac{2(r-1)}{t\left(\sum_{i=0}^{r-2}\frac{1}{t-i}\right)^2} -
\frac{(r-1)\sum_{i=0}^{r-2}\frac{1}{(t-i)^2}}{\left(\sum_{i=0}^{r-2}\frac{1}{t-i}\right)^3}\right]\\
&\hspace*{5mm} + \frac{1}{f_r(e) (e-x) u^2 \sum_{i=0}^{r-1}\frac{1}{u-i}}
\left[-r +\frac{2r}{u\sum_{i=0}^{r-1}\frac{1}{u-i}}-\frac{r\sum_{i=0}^{r-1}\frac{1}{(u-i)^2}}
{\left(\sum_{i=0}^{r-1}\frac{1}{u-i}\right)^2}\right].
\end{align*}

Applying these two inequalities,  
\begin{align*}
\sum_{i=0}^{r-2}\frac{1}{(t-i)^2} \geq \frac{(r-1)}{t^2} \hspace*{1cm}\text{and} \hspace*{1cm}
\sum_{i=0}^{r-1}\frac{1}{(u-i)^2} \geq \frac{r}{u^2},  
\end{align*}

we have 
\begin{align*}
&
  -\frac{1}{\sum_{i=0}^{r-2}\frac{1}{t-i}}
+\frac{2(r-1)}{t\left(\sum_{i=0}^{r-2}\frac{1}{t-i}\right)^2} -
\frac{(r-1)\sum_{i=0}^{r-2}\frac{1}{(t-i)^2}}{\left(\sum_{i=0}^{r-2}\frac{1}{t-i}\right)^3} \\ 
& = -\frac{1}{\sum_{i=0}^{r-2}\frac{1}{t-i}}
\left[ 1-\frac{2(r-1)}{t\sum_{i=0}^{r-2}\frac{1}{t-i}}+ \frac{(r-1)\sum_{i=0}^{r-2}\frac{1}{(t-i)^2}}{(\sum_{i=0}^{r-2}\frac{1}{t-i})^2} \right]\\
& \leq -\frac{1}{\sum_{i=0}^{r-2}\frac{1}{t-i}}
\left[ 1-\frac{2(r-1)}{t\sum_{i=0}^{r-2}\frac{1}{t-i}}+ \frac{(r-1)^2}{t^2(\sum_{i=0}^{r-2}\frac{1}{t-i})^2} \right]\\
&= -\frac{1}{\sum_{i=0}^{r-2}\frac{1}{t-i}}
\left[ 
1- \frac{r-1}{t\sum_{i=0}^{r-2}\frac{1}{t-i}}
\right]^2\\
&\leq 0, 
\end{align*}
and similarly
\begin{equation*}
  -1 +\frac{2r}{u\sum_{i=0}^{r-1}\frac{1}{u-i}}-\frac{r\sum_{i=0}^{r-1}\frac{1}{(u-i)^2}}{\left(\sum_{i=0}^{r-1}\frac{1}{u-i}\right)^2}
\leq - \left[ 
1- \frac{r}{u\sum_{i=0}^{r-1}\frac{1}{u-i}}
\right]^2\leq 0.
\end{equation*}

Thus, we have
\begin{align*}
  F''(x)&\leq \frac{x^{1/(r-1)-1}}{t^2 f_r(e)^{r/(r-1)}}\left[\frac{rt}{r-1}
-\frac{r}{\sum_{i=0}^{r-2}\frac{1}{t-i}}\right] - \frac{r-1}{f_r(e) (e-x) u^2 \sum_{i=0}^{r-1}\frac{1}{u-i}}\\
&= \frac{x^{1/(r-1)-1}}{t^2 f_r(e)^{r/(r-1)}} \frac{r\sum_{i=0}^{r-2}\frac{i}{t-i}}{(r-1)\sum_{i=0}^{r-2}\frac{1}{t-i}} - \frac{r-1}{f_r(e) (e-x) u^2 \sum_{i=0}^{r-1}\frac{1}{u-i}}.
\end{align*}
To show the right side is negative, it is sufficient to prove
\begin{equation}
  \label{eq:f21}
  \frac{x^{1/(r-1)-1}}{t^2 f_r(e)^{1/(r-1)}} \frac{r\sum_{i=0}^{r-2}\frac{i}{t-i}}{(r-1)\sum_{i=0}^{r-2}\frac{1}{t-i}} \leq \frac{r-1}{ (e-x) u^2 \sum_{i=0}^{r-1}\frac{1}{u-i}}.
\end{equation}
Equivalently,
\begin{equation}
  \label{eq:f22}
  \frac{u^2}{t^2} \cdot \frac{e-x}{x^{1-1/(r-1)} f_r(e)^{1/(r-1)}} \cdot
\frac{r}{(r-1)^2} \cdot
\frac{\sum_{i=0}^{r-2}\frac{i}{t-i}}{\sum_{i=0}^{r-2}\frac{1}{t-i}}
\cdot \sum_{i=0}^{r-1}\frac{1}{u-i}\leq 1.
\end{equation}
Since $\frac{\sum_{i=0}^{r-2}\frac{i}{t-i}}{\sum_{i=0}^{r-2}\frac{1}{t-i}}\leq r-2$
and $\sum_{i=0}^{r-1}\frac{1}{u-i}\leq \frac{r}{u-r+1}$, it is sufficient to prove
\begin{equation}
  \label{eq:f23}
  \frac{u^2}{t^2}\cdot  \frac{e-x}{x^{1-1/(r-1)} f_r(e)^{1/(r-1)}} \cdot
\frac{r(r-2)}{(r-1)^2} 
\cdot \frac{r}{u-r+1}
\leq 1.
\end{equation}
Replacing $x={t\choose r-1}$, $e-x={u\choose r}={u\choose r-1}\frac{u-r+1}{r}
$,
and $f_r(e)={s-1\choose r-1}$, the left side of Equation \eqref{eq:f23} becomes
\begin{align*}
LHS &=  \frac{u^2}{t^2}\cdot\frac{x^{1/(r-1)}}{f_r(e)^{1/(r-1)}} \cdot
\frac{e-x}{x}\cdot
\frac{r(r-2)}{(r-1)^2} 
\cdot \frac{r}{u-r+1} \\
&= \frac{u^2}{t^2}\cdot \frac{{u\choose r-1}}{{s-1\choose t-1}^{1/(r-1)}
{t\choose r-1}^{r/(r-1)}} \frac{r^2(r-2)}{(r-1)^2}\\
&\leq \frac{r(r-2)}{(r-1)^2}\\
&< 1.
\end{align*}
The second last inequality is due to the fact (\ref{eq:tsu})  that $t\geq s-1\geq u$.
Thus, $F''(x)\leq 0$. Proof of item 3 is finished.

\end{proof}

\section{Proof of main theorem}

\begin{proof}[Proof of Theorem \ref{Th1}] 
We will use double inductions on $r$ and $e$ to prove the theorem. 
For $r = 2$ and any $e\geq 0$, Theorem \ref{Th1} is just Stanley's theorem.

Inductively, we assume the statement is true for all $(r-1)$-hypergraphs. For $r$-hypergraph, clearly, the statement is trivial for the cases $e=0, 1$. We assume the statement holds for all
$r$-hypergraphs with less than $e$ edges.

Let $H$ be the maximum hypergraph among all $r$-hypergraphs of $e$ edges.
By Lemma \ref{makelarger}, $H$ has only one non-trivial connected component. By deleting isolated
vertices if possible, we may assume that $H$ is connected. 

Let $\textbf{x}$ be the Perron-Frobenius vector of $H$ and 
$v$ be a vertex such that $x_v = \max\{ x_u, u\in V(H)\}$.
By Lemma \ref{GvHv},  the degree $d$ of $v$ is at least $f_r(e)$. 
Recall that $H_v$ is the induced hypergraph obtained from $H$ by deleting the vertex $v$ and
$G_v$ is the link graph of $H$ at $v$.  

The main idea is to construct an $\alpha$-subnormal labeling for $H$ by combining
the $\alpha_1$-normal labeling of $G_v$ and the $\alpha_2$-normal labeling of $H_v$ properly.
By Lemma \ref{GvHv}, $G_v$ is a connected $(r-1)$-hypergraph with $d$ edges.
By inductive hypothesis, we have
\begin{equation}
  \label{eq:Gv}
  \rho(G_v)\leq f_{r-1}(d).
\end{equation}

By Lemma \ref{connectedgraph}, $G_v$ has a consistent $\alpha_1$-normal labeling with
$\alpha_1=\rho(G_v)^{-(r-1)}$. Let $B_1$ be the  weighted incidence matrix of $G_v$
 corresponding to this $\alpha_1$-normal labeling. We have
 \begin{align}
   \label{eq:B1v}
 \sum\limits_{f \in E(G_v)\colon u\in f}B(u, f)&=1, \quad \mbox{ for any vertex } u\in V(G_v),\\
\label{eq:B1e}
\prod\limits_{u \in f}B(u, f)&=\alpha_1, \quad \mbox{ for any edge } f\in E(G_v).
 \end{align}

Let $H'_v$ be the unique non-trivial connected component of $H_v$. Then
$H'_v$ has $|E(H_v)|=e-d$ edges. By inductive hypothesis, we have
\begin{equation}
  \label{eq:Hv}
  \rho(H'_v)\leq f_{r}(e-d).
\end{equation}

By Lemma \ref{connectedgraph}, $H'_v$ has a consistent $\alpha_2$-normal labeling with $\alpha_2=\rho(H'_v)^{-r}$. Let $B_2$ be the  weighted incidence matrix of $H'_v$
 corresponding to this $\alpha_2$-normal labeling. We have
\begin{align}
   \label{eq:B2v}
 \sum\limits_{f \in E(H'_v)\colon u\in f}B(u, f)&=1, \quad \mbox{ for any vertex } u\in V(H'_v),\\
\label{eq:B2e}
\prod\limits_{u \in f}B(u, f)&=\alpha_2, \quad \mbox{ for any edge } f\in E(H'_v).
 \end{align}

Now we define a weighed incidence matrix $B$ of the hypergraph $H$.
For any vertex $u\in V(H)$ and any edge $f\in E(H)$, we have
\begin{equation}
  \label{eq:B}
  B(u,f)=
  \begin{cases}
    0 & \mbox{ if } u\notin f;\\
   1/d & \mbox{ else if } u=v;\\
  xB_1(u, f-\{v\}) & \mbox{ else if } u\in f;\\
  yB_2(u, f) & \mbox{ otherwise.}
  \end{cases}
\end{equation} 
Here $x$, $y$ are two real numbers in $[0,1]$ and will be chosen later.

Now consider the following two properties about $B$:
\begin{itemize}
\item
For each vertex $u\in V(H)$, we estimate 
$\sum\limits_{f \in E(H), u\in f}B(u, f)$ as follows:

If $u=v$,
we have
\begin{equation}\label{eq:Bv}
\sum\limits_{f \in E(H), v\in f}B(v, f)=d\times \frac{1}{d}=1. 
\end{equation}
If $u\not= v$, we have
\begin{equation}\label{xiaddyj} 
\sum\limits_{f \in E(H), u\in f}B(u, f)=
x\sum\limits_{f' \in E(G_v), u\in f'}B(u, f')
+ y \sum\limits_{f \in E(H_v), u\in f}B(u, f) 
\leq x + y.
\end{equation}
Here we applied Equations \eqref{eq:B1v} and \eqref{eq:B2v}. Notice that
if $u$ is an isolated vertex of $H_v$, then the second sum is $0$. Nevertheless,
the above inequlity holds.

\item Now we estimate $\prod\limits_{u\in f} B(u,f)$ for each edge $f\in E(H)$.

If $v\in f$, then
\begin{equation}\label{alpha}
\prod\limits_{u \in f}B(u, e)=\frac{1}{d}\prod\limits_{u \in f-\{v\}}xB_1(u, e')= \frac{1}{d}x^{r-1}\alpha_1.
\end{equation}
If $v\not\in f$, then
\begin{equation}\label{alpha1}
 \prod\limits_{u \in f}B(u, e)=\prod\limits_{u\in f}yB_2(u, e)= y^r\alpha_2. 
\end{equation}

Set $\alpha=\left(f_r(e)\right)^{-r}$, $x=\left(\frac{d \alpha }{\alpha_1}\right)^{1/(r-1)}$,
and $y=\left(\frac{\alpha}{\alpha_2}\right)^{1/r}$. Then for each $f\in E(H)$,
we have
\begin{equation}
  \label{eq:Bf}
  \prod\limits_{u \in f}B(u, e)=\alpha.
\end{equation}

Recall $\alpha_1=\rho(G_v)^{-(r-1)}$ and $\alpha_2=\rho(H'_v)^{-r}$.
Combining with Inequalities \eqref{eq:Gv} and \eqref{eq:Hv}, we have
\begin{align*}
x + y
&=\left(\frac{d \alpha }{\alpha_1}\right)^{1/(r-1)} + \left(\frac{\alpha}{\alpha_2}\right)^{1/r}\\
& =\frac{d^{\frac{1}{r-1}} \rho(G_v)}{f_r(e)^{\frac{r}{r-1}}} + \frac{\rho(H_v)}{f_r(e)} \\
&\leq \frac{d^{\frac{1}{r-1}}f_{r-1}(d)}{f_r(e)^{\frac{r}{r-1}}}  + \frac{f_r(e-d)}{f_r(e)} \\
& \leq 1. 
\end{align*}
The last inequality is due to Lemma \ref{l2}
 since $f_r(e)\leq d\leq e$. 
\end{itemize}

Combining this with Equations \eqref{eq:Bv} and (\ref{xiaddyj}), we have
\begin{equation}
  \label{eq:Bu}
\sum\limits_{f \in E(H), u\in f}B(u, f)\leq 1.
\end{equation}
Equations \eqref{eq:Bf} and \eqref{eq:Bu} imply that
$H$  is  $\alpha$-subnormal with $\alpha=\left(f_r(e)\right)^{-r}$. 
Hence, by Lemma \ref{subtheorem}, we have 
$$\rho(H)\leq f_r(e).$$ 

When the inequality holds, we must have $e={k\choose r}$, $d=f_{r}(e)={k-1\choose r-1}$,
$\rho(G_v)={k-2\choose r-2}$, and $\rho(H_v)={k-2\choose r-1}$. By induction, $G_v$ is the
complete graph $K_{k-1}^{r-1}$ and $H_v$ is the complete graph $K_{k-1}^r$. Thus,
 $H$ is the complete graph $K_k^r$.
Since adding isolated vertices will not change the number of edges and the spectral radius,
 the inequality in Theorem \ref{Th1} holds if and only if $H$ is 
the complete hypergraph possibly with some isolated vertices added. 
\end{proof}

\end{document}